\newcommand{\al}{\alpha}
\newcommand{\be}{\beta}
\newcommand{\de}{\delta}
\newcommand{\ga}{\gamma}
\newcommand{\ka}{\kappa}
\newcommand{\la}{\lambda}
\newcommand{\om}{\omega}
\newcommand{\si}{\sigma}
\newcommand{\te}{\theta}
\newcommand{\ze}{\zeta}
\newcommand{\De}{\Delta}
\newcommand{\Ga}{\Gamma}
\newcommand{\Om}{\Omega}
\newcommand{\RR}{\mathbb{R}}
\renewcommand{\SS}{\mathbb{S}}
\newcommand{\gb}{\bar{g}}
\newcommand{\Gab}{\bar{\Gamma}}
\newcommand{\gad}{\dot{\gamma}}
\newcommand{\gz}{\mathring{g}}
\newcommand{\nablaz}{\mathring{\nabla}}
\newcommand{\Tz}{\mathring{T}}
\newcommand{\hz}{\mathring{h}}
\newcommand{\zez}{\mathring{\zeta}}
\newcommand{\Scal}{\mathcal{S}}
\newcommand{\Wcal}{\mathcal{W}}
\newcommand{\nablab}{\bar{\nabla}}
\newcommand{\dmu}[1]{\,d\mu_{#1}}
\newcommand{\pa}{\partial}
\newcommand{\pat}[1]{\frac{\pa #1}{\pa t}}
\newtheorem{theorem}{Theorem}[section]
\newtheorem{lemma}[theorem]{Lemma}
\newtheorem{corollary}[theorem]{Corollary}
\newtheorem{remark}[theorem]{Remark}
\title{Stability of geodesic spheres in $\mathbb{S}^{n+1}$ under constrained curvature flows}
\author{David Hartley}
\address{Instituto de Ciencias Matem\'aticas, Consejo Superior de
  Investigaciones Cient\'\i ficas, 28049 Madrid, Spain}
\email{david.hartley@icmat.es}
\date{}
\begin{document}

\begin{abstract}
In this paper we discuss the stability of geodesic spheres in $\SS^{n+1}$ under constrained curvature flows. We prove that under some standard assumptions on the speed and weight functions, the spheres are stable under perturbations that preserve a volume type quantity. This extends results of \cite{Escher98A} and \cite{Hartley15} to a Riemannian manifold setting.
\end{abstract}

\maketitle


\section{Introduction}\label{SecIntro}
We consider a family of hypersurfaces that are compact without boundary, $\{\Om_t\}_{t\in[0,T)}$, inside a Riemannian manifold $(N^{n+1},\gb)$ moving with a speed function $\hat{G}$ in the direction of the normal. If $\Om_0$ is given by an embedding $\tilde{X}_0:M^n\rightarrow N^{n+1}$ then the family is obtained by solving for an $X:M^n\times[0,T)\rightarrow N^{n+1}$ that satisfies
\begin{equation}\label{GeneralEq}
	\pat{X} = \hat{G}(X)\nu,\ X(\cdot,0)=\tilde{X}_0
\end{equation}
where $\nu$ is the outer unit normal to $\Om_t$, with $\Om_t=X(M^n,t)$. We will consider speed functions, $\hat{G}$, of the form:
\begin{equation}\label{GhatForm}
\hat{G}(X):=\frac{\int_{M^n}F(\ka)\hat{\Xi}(X)\dmu{}}{\int_{M^n}\hat{\Xi}(X)\dmu{}} -F(\ka),
\end{equation}
where $\dmu{}$ and $\ka=(\ka_1,\ldots,\ka_n)$ are, respectively, the induced volume form and principal curvatures (i.e. eigenvalues of the Weingarten map $\Wcal$) on $X(\Om)$, $\hat{\Xi}$ is a weight function, and $F$ is a smooth, symmetric function on $\RR^n$ satisfying $\frac{\pa F}{\pa \ka_a}(\ka(\tilde{X}_0))>0$.

We will consider a fairly general form of the weight function $\hat{\Xi}$, however of special interest are the cases when
\begin{equation}\label{XiFormEq}
\hat{\Xi}(X)=\sum_{a=0}^{n+1}c_a\hat{\Xi}_a(X)
\end{equation}
for some $\{c_0,\ldots,c_{n+1}\}\in\RR^{n+2}$, where
\begin{equation}\label{XiaFormEq}
\hat{\Xi}_a(X):=\left\{\begin{array}{ll} -\frac{g^{ik}}{(n+1)\binom{n}{n-a}}\left(\frac{\pa E_{n-a}}{\pa h^i_j}\gb(\bar{R}(\nu,T_k)\nu,T_j) +\nabla_j\nabla_k\left(\frac{\pa E_{n-a}}{\pa h^i_j}\right)\right) +\binom{n+1}{n+1-a}^{-1}E_{n+1-a} & \text{if }a=0,\ldots,n,\\ 1 & \text{if }a=n+1,\end{array}\right.
\end{equation}
$g$ is the induced metric on $X(M^n)$, $\bar{R}(U,W)Z=\nablab_W\nablab_UZ-\nablab_U\nablab_WZ+\nablab_{[U,W]}Z$ is the Riemann curvature tensor of $(N^{n+1},\gb)$, $\nablab$ and $\nabla$ are the Levi-Civita connections on $(N^{n+1},\gb)$ and induced on $(X(M^n),g)$ respectively, and
\[
E_a:=\sum_{1\leq b_1<\ldots<b_a\leq n}\prod_{i=1}^a\ka_{b_i},
\]
are the elementary symmetric functions of the Weingarten map. Note that for hypersurfaces in Euclidean space $g^{ik}\nabla_k\left(\frac{\pa E_{n-a}}{\pa h^i_j}\right)=0$; see the proof of Lemma 2.2.2. in \cite{HartleyPhD}. With $\hat{\Xi}$ as in (\ref{XiFormEq}), the flow (\ref{GeneralEq}) preserves the real valued quantity
\begin{equation}\label{VhatForm}
\hat{V}(\Om):=\sum_{a=0}^{n+1}c_a\hat{V}_a(\Om),
\end{equation}
where $\hat{V}_a$ are the mixed volumes
\[
\hat{V}_a(\Om):=\left\{\begin{array}{ll} \frac{1}{(n+1)\binom{n}{n-a}}\int_{M^n}E_{n-a}\dmu{} & \text{if }a=0,\ldots,n,\\ \text{Vol}(\Om) & \text{if }a=n+1.\end{array}\right.
\]
The topic of intrinsic volumes is more complicated in spherical space than Euclidean space. For example, in \cite{Gao03} they consider three different definitions, however each is a linear combination of the mixed volumes defined above and hence can be preserved by choosing the $c_a$ constants appropriately. See Appendix \ref{SecWeight} for a proof that $\hat{V}$ is preserved under the flow when $\hat{\Xi}$ is given by (\ref{XiFormEq}).

This flow in Euclidean space, and with a weight function such that a mixed volume is preserved, has been studied previously by McCoy in \cite{McCoy05}. There it was proved that under some additional conditions on $F$, for example homogeneity of degree one and convexity or concavity, initially convex hypersurfaces admit a solution for all time and that the hypersurfaces converge to a sphere as $t\to\infty$. This was an extension of a result by Huisken \cite{Huisken87} who proved the result for the volume preserving mean curvature flow (VPMCF). The stability of spheres has previously been considered by Escher and Simonett in \cite{Escher98A} for the case of the VPMCF in Euclidean space where it was proved that they were stable under small perturbations in the little H\"older space $h^{1,\alpha}$, any $\alpha\in(0,1)$. This result was extended by the author, \cite{Hartley15B}, to the case of mixed-volume preserving curvature flows, with the perturbations in the space $h^{2,\alpha}$ to account for the fully nonlinear nature of the flows. 

For flows of this nature in Riemannian manifolds Huisken noted in \cite{Huisken87} that even the VPMCF in $\mathbb{S}^{n+1}$ will, in general, not preserve the convexity of a hypersurface, thus making the standard analysis more difficult. In \cite{Alikakos03} Alikakos and Freire prove that if the manifold $N^{n+1}$ has a finite number of critical points of the scalar curvature that are all non-degenerate, then the geodesic spheres close to the critical points are stable under volume preserving perturbations. In the case when $N^{n+1}$ is hyperbolic space, Cabezas-Rivas and Vicente in \cite{Cabezas07} prove that the VPMCF of hypersurfaces that satisfy a certain convexity property exist for all time and converge to geodesic spheres. They also prove that geodesic spheres in these manifolds are stable with respect to the VPMCF under $h^{1,\alpha}$ perturbations.

In this paper we consider the stability of geodesic spheres in $\mathbb{S}^{n+1}$ under the flow (\ref{GeneralEq}). The main theorem is:
\begin{theorem}\label{MainTheorem}
A geodesic sphere $\Scal\subset\SS^{n+1}$ is stable under perturbations in $h^{2,\al}$, for any $\al\in(0,1)$, with respect to the flow (\ref{GeneralEq}), with $\hat{G}$ as in (\ref{GhatForm}), if the following hold:
\begin{itemize}
	\item $F$ is a smooth, symmetric function of the principal curvatures,
	\item $\frac{\pa F}{\pa\ka_1}(\ka(\Scal))>0$, and
	\item $\hat{\Xi}(\Scal)=const\neq0$.
\end{itemize}
To be precise let $\Om_0$ be a $h^{2,\al}$-close normal geodesic graph over $\Scal$, then the flow by (\ref{GeneralEq}) exists for all time and the hypersurfaces $\Om_t:=X(M^n,t)$ converge in $h^{2,\al}$ to a geodesic sphere close to $\Scal$.
\end{theorem}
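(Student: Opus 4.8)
The plan is to recast the geometric flow as an abstract fully nonlinear parabolic equation for a scalar height function over the fixed sphere $\Scal$, and then invoke a generalized principle of linearized stability, following the strategy of \cite{Escher98A} and \cite{Hartley15B} but now in the curved ambient space $\SS^{n+1}$. First I would write every hypersurface $h^{2,\al}$-close to $\Scal$ as a normal geodesic graph, parametrising $\Om_t$ by a scalar function $\rho(\cdot,t)$ over $\Scal$ via the normal exponential map. Substituting into (\ref{GeneralEq}) with $\hat{G}$ given by (\ref{GhatForm}) turns the flow into a scalar evolution equation $\pat{\rho}=\Phi(\rho)$, where $\Phi$ is a fully nonlinear, \emph{nonlocal} operator, the nonlocality coming from the averaging integral in (\ref{GhatForm}). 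The key structural facts are: $\rho\equiv0$ is an equilibrium because $F(\ka)$ is constant on a geodesic sphere so $\hat{G}\equiv0$ there; every geodesic sphere is likewise an equilibrium; and since the flow preserves $\hat{V}$, the accessible equilibria near $\Scal$ form a finite-dimensional manifold $\mathcal{E}$, namely the geodesic spheres of the $\hat{V}$-value determined by the radius of $\Scal$, parametrised by their centre.

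Next I would set the problem in the little H\"older spaces, regarding $\Phi$ as a map between open subsets of $h^{2,\al}$ and $h^{\al}$. Because $h^{k,\al}$ is the closure of the smooth functions in $C^{k,\al}$, the continuous maximal regularity and analytic semigroup theory applies; the hypothesis $\frac{\pa F}{\pa\ka_1}(\ka(\Scal))>0$, hence by symmetry $\frac{\pa F}{\pa\ka_a}(\ka(\Scal))>0$ for all $a$, guarantees that the principal symbol of the linearisation $A:=D\Phi(0)$ is a positive multiple of the Laplacian, so $A$ is elliptic and generates an analytic semigroup. I would then compute $A$ explicitly. Exploiting the $O(n+1)$-symmetry of $\Scal$, the linearised operator commutes with the isometries fixing the centre, so it is diagonalised by spherical harmonics and acts on the degree-$\ell$ eigenspace $Y_\ell$, with $\De_{\Scal}Y_\ell=-\la_\ell Y_\ell$ and $\la_\ell=\ell(\ell+n-1)$, as multiplication by a scalar $m(\ell)$.

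The decisive computation is the sign analysis of $m(\ell)$. The constraint $\hat{\Xi}(\Scal)=\mathrm{const}\neq0$ ensures the nonlocal term projects off precisely the $\ell=0$ component, so that mode is neutralised by the volume constraint. The $\ell=1$ eigenspace, of dimension $n+1$, corresponds to infinitesimal displacements of the centre of $\Scal$ and therefore must lie in the kernel of $A$; I would verify $m(1)=0$ and identify $\ker A$ with $T_{\Scal}\mathcal{E}$. For $\ell\geq2$ I would show $m(\ell)<0$, with $m(\ell)$ bounded above by a negative constant, using the explicit form of the Weingarten map, of $F$, and of $\hat{\Xi}$ on geodesic spheres in $\SS^{n+1}$. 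This establishes \emph{normal stability}: $0$ is a semisimple eigenvalue of $A$ with eigenspace $T_{\Scal}\mathcal{E}$, and the remainder of the spectrum lies in $\{\mathrm{Re}\,\la<0\}$.

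The generalized principle of linearized stability for fully nonlinear parabolic equations, as used in \cite{Hartley15B}, then yields that $\Scal$ is stable and that any $h^{2,\al}$-close initial graph flows, for all time, to some geodesic sphere in $\mathcal{E}$, converging in $h^{2,\al}$ and at an exponential rate. The main obstacle is exactly this spectral step: obtaining a closed expression for $m(\ell)$ for the general speed $F$ and general admissible weight $\hat{\Xi}$, confirming that the constraint kills the $\ell=0$ mode, that $m(1)=0$ matches the $(n+1)$-dimensional family of recentred spheres, and that $m(\ell)$ is uniformly negative for $\ell\geq2$. The curvature of $\SS^{n+1}$ enters the Weingarten and Jacobi-field computations and makes $m(\ell)$ considerably more intricate than its Euclidean analogue.
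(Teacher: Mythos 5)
Your overall strategy --- writing the flow as a nonlocal fully nonlinear scalar equation for the normal-graph function, linearising at the sphere, diagonalising by spherical harmonics in little H\"older spaces, and invoking a dynamical-systems theorem --- is the paper's strategy. However, there is a genuine error in the decisive spectral step. The linearisation at $\Scal=\Scal_\theta$ works out to
\[
DG(0)[w]=\frac{\pa F}{\pa\ka_1}(\mathring{\ka})\sin(\theta)^{-2}\Bigl(\De_{g_{\SS^n}}w+nw-n\fint_{\SS^n}w\dmu{0}\Bigr),
\]
so on the degree-$\ell$ spherical harmonics the eigenvalue is a positive multiple of $n-\ell(\ell+n-1)$ for $\ell\ge1$, but on the constants ($\ell=0$) the nonlocal term does not remove the mode from the problem: it cancels the otherwise \emph{unstable} eigenvalue $+n$ and leaves the eigenvalue $0$. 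Hence $\ker DG(0)$ has dimension $n+2$ (first harmonics \emph{plus} constants), not $n+1$. Correspondingly, the equilibrium set of the evolution equation near $\Scal$ is the full $(n+2)$-parameter family of nearby geodesic spheres (centre \emph{and} radius), and it is this family whose tangent space equals the kernel. Your $\mathcal{E}$, the $(n+1)$-parameter family of spheres with $\hat V=\hat V(\Scal)$, has tangent space strictly smaller than $\ker A$, so the hypothesis that $0$ is semisimple with eigenspace $T_{\Scal}\mathcal{E}$ fails as you have set things up and the principle of linearized stability cannot be applied. This is also internally inconsistent with your earlier, correct, remark that \emph{every} geodesic sphere is an equilibrium; and note that the limit sphere has $\hat V=\hat V(\Om_0)\neq\hat V(\Scal)$ in general, so it need not even lie in your $\mathcal{E}$.

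The repair is either to enlarge the equilibrium manifold to all nearby geodesic spheres --- this is what the paper does: its $\mathscr{S}$ is parametrised by $b\in\RR^{n+2}$ with $T_0\mathscr{S}=\text{span}\bigl(\{Y_i^{(n)}\}\cup\{1\}\bigr)$, and the centre-manifold theorems of Lunardi and Vanderbauwhede--Iooss identify $\mathscr{S}$ with the local centre manifold and give exponential attraction to a single stationary point --- or to restrict the dynamics to the invariant level sets of the conserved quantity $V$ and check that on the tangent space $\{w:\fint_{\SS^n}w\dmu{0}=0\}$ the kernel reduces to the first harmonics; your proposal does neither. Two smaller points: $DG(0)$ is elliptic only up to the bounded nonlocal averaging term, so sectoriality should be established for the local part and then transferred by bounded perturbation, as the paper does; and the exponential convergence one actually obtains is in the mixed norm of Lemma \ref{CenterManifold}, with the projection onto the kernel measured in $h^{0,\al}$.
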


Note that the little H\"older spaces on a manifold, $h^{k,\alpha}(M^n)$ for $k\in\mathbb{N}_0$ and $\al\in(0,1)$, are defined as the completion of the smooth functions inside the standard H\"older space $C^{k,\al}(M^n)$. They are useful in analysing stability properties as they obey a self interpolation property, \cite[Equation 19]{Guenther02}.

The paper is structured as follows. In Section \ref{SecNormal} we consider the properties of hypersurfaces that are normal geodesic graphs over a base hypersurface. We also give an equation on the space of graph functions that is equivalent to (\ref{GeneralEq}). Section \ref{SecLin} derives the linearisation of the speed function in the case where the hypersurfaces are graphs over a geodesic sphere in $\SS^{n+1}$. The space of functions that define geodesic spheres close to the base sphere is then analysed in Section \ref{SecSpheres}, and finally Theorem \ref{MainTheorem} is proved in Section \ref{SecProof}. In Appendix \ref{SecWeight} we derive the formula for the weight function so that the quantity $\hat{V}$ is preserved under the flow.


\section{Normal Geodesic Graphs and Equivalent Equations}\label{SecNormal}
We will consider the situation where the hypersurfaces are normal (geodesic) graphs over a base hypersurface $X_0(M^n)$ with outer unit normal $\nu_0$. These hypersurfaces can be written as $X_u(p)=\ga(p,u(p))$ where $u:M^n\rightarrow\RR$, and $\ga_p(s):=\ga(p,s)$ is the unique unit speed geodesic satisfying $\ga_p(0)=X_0(p)$ and $\gad_p(0)=\nu_0(p)$, we use a dot to denote a derivative with respect to the geodesic parameter, $s$. Note that we require $\|u\|_{C^0}<C_0$, where $C_0$ is the injectivity radius of $X_0(M^n)\subset N^{n+1}$.

Standard calculations give us the following formulas
\begin{lemma}\label{GraphFormulas}
The tangent vectors for a normal graph $X_u(M^n)$ are given by
\[
T_i(u)=\left.\frac{\pa\ga}{\pa p^i}+\nabla_iu \gad\right|_{s=u},
\]
the induced metric components are given by
\[
g_{ij}(u)=\left.\gb\left(\frac{\pa\ga}{\pa p^i},\frac{\pa\ga}{\pa p^j}\right)\right|_{s=u}+\nabla_iu\nabla_ju,
\]
and the unit normal by
\[
\nu(u)=\left.\sqrt{1-|\nabla u|_{g(u)}^2}\gad -\frac{g^{ij}(u)\nabla_iu}{\sqrt{1-|\nabla u|_{g(u)}^2}}\frac{\pa\ga}{\pa p^j}\right|_{s=u},
\]
where $g^{ij}(u)$ are the components of the inverse of $g(u)$, also note that ${|\nabla u|_{g(u)}=\sqrt{g^{ij}(u)\nabla_iu\nabla_ju}<1}$.
\end{lemma}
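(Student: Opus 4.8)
The plan is to verify the three formulas in turn; the only non-routine ingredient is the Gauss lemma for the geodesic variation $\ga$, and everything else reduces to the chain rule and a rank-one inverse computation. The tangent vectors come directly from differentiating $X_u(p)=\ga(p,u(p))$ in $p^i$. By the chain rule $T_i(u)=\frac{\pa\ga}{\pa p^i}+\frac{\pa\ga}{\pa s}\frac{\pa u}{\pa p^i}$ evaluated at $s=u$, and since $u$ is a scalar function we may replace $\frac{\pa u}{\pa p^i}$ by $\nabla_i u$ and $\frac{\pa\ga}{\pa s}$ by $\gad$, giving the first formula.

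For the metric I would expand $g_{ij}(u)=\gb(T_i(u),T_j(u))$ and use three facts: $\gb(\gad,\gad)=1$ (unit speed), the pairing $\gb(\frac{\pa\ga}{\pa p^i},\frac{\pa\ga}{\pa p^j})$, and the orthogonality $\gb(\gad,\frac{\pa\ga}{\pa p^i})=0$. The last is the crucial point and I would prove it by the Gauss lemma: the function $s\mapsto\gb(\gad,\frac{\pa\ga}{\pa p^i})$ has $s$-derivative $\gb(\frac{D}{ds}\gad,\frac{\pa\ga}{\pa p^i})+\gb(\gad,\frac{D}{ds}\frac{\pa\ga}{\pa p^i})$, whose first term vanishes since $\ga_p$ is a geodesic and whose second term, by the symmetry lemma $\frac{D}{ds}\frac{\pa\ga}{\pa p^i}=\frac{D}{dp^i}\gad$ together with unit speed, equals $\frac{1}{2}\frac{\pa}{\pa p^i}\gb(\gad,\gad)=0$. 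As the function is therefore constant in $s$ and equals $\gb(\nu_0,\frac{\pa X_0}{\pa p^i})=0$ at $s=0$, it vanishes identically. Dropping the cross terms leaves the diagonal pairing plus $\nabla_iu\nabla_ju$, which is the second formula.

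For the normal I would set $\nu(u)=\al\gad+\be^j\frac{\pa\ga}{\pa p^j}$ and determine $\al,\be^j$ from orthogonality and unit length. Writing $\hat g_{ij}:=\gb(\frac{\pa\ga}{\pa p^i},\frac{\pa\ga}{\pa p^j})|_{s=u}$, so that $g_{ij}(u)=\hat g_{ij}+\nabla_iu\nabla_ju$, the conditions $\gb(\nu(u),T_i(u))=0$ collapse, again via the Gauss lemma, to $\be^j\hat g_{ij}=-\al\nabla_iu$, so $\be^j=-\al\hat g^{jk}\nabla_ku$, and normalisation gives $\gb(\nu(u),\nu(u))=\al^2(1+|\nabla u|_{\hat g}^2)=1$. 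The last task is to rewrite $\al$ and $\be^j$ in terms of the inverse of the full metric $g(u)$; since $g_{ij}(u)$ is a rank-one update of $\hat g_{ij}$, the Sherman--Morrison identity yields $g^{jk}(u)\nabla_ku=\frac{\hat g^{jk}\nabla_ku}{1+|\nabla u|_{\hat g}^2}$ and $|\nabla u|_{g(u)}^2=\frac{|\nabla u|_{\hat g}^2}{1+|\nabla u|_{\hat g}^2}$, hence $1+|\nabla u|_{\hat g}^2=(1-|\nabla u|_{g(u)}^2)^{-1}$. Substituting turns these into $\al=\sqrt{1-|\nabla u|_{g(u)}^2}$ (positive root, to match the outward orientation of $\nu_0$) and $\be^j=-g^{jk}(u)\nabla_ku/\sqrt{1-|\nabla u|_{g(u)}^2}$, which is the stated formula.

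The main obstacle is the Gauss-lemma orthogonality $\gb(\gad,\frac{\pa\ga}{\pa p^i})=0$; once it is in hand, both the metric and the normal follow by substitution and the rank-one inverse algebra, which I would treat as routine.
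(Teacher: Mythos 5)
Your argument is correct. For the tangent vectors and the induced metric you follow exactly the paper's route: the chain rule, and the Gauss-lemma orthogonality $\gb\left(\gad,\frac{\pa\ga}{\pa p^i}\right)=0$ established by differentiating in $s$, using the geodesic equation and the ($p^i$-derivative of the) unit-speed condition, and evaluating at $s=0$. Where you genuinely diverge is the normal: the paper merely \emph{verifies} the stated formula by pairing it with the tangent vectors and using $\left.\gb\left(\frac{\pa\ga}{\pa p^i},\frac{\pa\ga}{\pa p^j}\right)\right|_{s=u}=g_{ij}(u)-\nabla_iu\nabla_ju$, whereas you \emph{derive} it from the ansatz $\nu(u)=\al\gad+\be^j\frac{\pa\ga}{\pa p^j}$, solving the orthogonality and normalisation conditions in the ``horizontal'' metric $\hat g$ and then converting to $g(u)$ via the Sherman--Morrison rank-one inverse identity. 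Your route is longer but buys two things the paper leaves implicit: it fixes the sign of $\al$ by matching the orientation of $\nu_0$, and the identity $|\nabla u|_{g(u)}^2=|\nabla u|_{\hat g}^2/(1+|\nabla u|_{\hat g}^2)$ actually proves the claim $|\nabla u|_{g(u)}<1$ stated at the end of the lemma, which the paper asserts without comment. Both arguments are sound; the paper's verification is shorter once the formula is guessed.
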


\begin{proof}
The formula for the tangent vectors follows directly from $X_u(p)=\ga(p,u(p))$. The metric formula then follows from using the unit speed condition $\gb(\gad,\gad)=1$ and the formula $\gb\left(\frac{\pa\ga}{\pa p^i},\gad\right)=0$. In fact,
\[
\frac{\pa}{\pa s}\left(\gb\left(\frac{\pa\ga}{\pa p^i},\gad\right)\right)=\gb\left(\nablab_{\frac{\pa\ga}{\pa p^i}}\gad,\gad\right)+\gb\left(\frac{\pa\ga}{\pa p^i},\nablab_{\gad}\gad\right)=0,
\]
where $\nablab_VU=V(U)+\Gab^{\al}_{\be\ga}V^{\be}U^{\ga}\pa_{\al}$ is the Levi-Civita connection on $(N^{n+1},\gb)$, and we have used the space derivative of the unit speed condition and the geodesic condition. Hence $\gb\left(\frac{\pa\ga}{\pa p^i},\gad\right)=\left.\gb\left(\frac{\pa\ga}{\pa p^i},\gad\right)\right|_{s=0}=\gb\left(\frac{\pa X_0}{\pa p^i},\nu_0\right)=0$, so
\[
g_{ij}(u)=\gb\left(T_i(u),T_j(u)\right)=\left.\gb\left(\frac{\pa\ga}{\pa p^i},\frac{\pa\ga}{\pa p^j}\right)+\nabla_iu\nabla_ju\gb(\gad,\gad)\right|_{s=u}.
\]

The formula for the unit normal can be seen by taking its inner product with the tangent vectors and using that $\left.\gb\left(\frac{\pa\ga}{\pa p^i},\frac{\pa\ga}{\pa p^j}\right)\right|_{s=u}=g_{ij}(u)-\nabla_iu\nabla_ju$.
\end{proof}

We now aim to show that the equation (\ref{GeneralEq}) is equivalent to an equation on $C^2(M^n)$. We define $L(u):=\gb(\gad|_{s=u},\nu(u))^{-1}=\frac{1}{\sqrt{1-|\nabla u|_{g(u)}^2}}$ and $G(u):=L(u)\hat{G}(X_u)$, and consider the flow
\begin{equation}\label{TanGenEq}
\frac{\pa u}{\pa t}=G(u),\ u(0)=u_0,
\end{equation}
then we have
\[
\frac{\pa X_u}{\pa t}=\frac{\pa u}{\pa t}\gad|_{s=u}=G(u)\gad|_{s=u},
\]
and in particular
\[
\left\langle\frac{\pa X_u}{\pa t},\nu(u)\right\rangle=\hat{G}(X_u).
\]
Therefore there is a tangential diffeomorphism $\phi_t:M^n\rightarrow M^n$, with $\phi_0=id$, such that $X_u(\phi_t(p),t)$ satisfies (\ref{GeneralEq}), with initial embedding $X_{u_0}$. Likewise if $X$ satisfies (\ref{GeneralEq}) and $X=X_u$ for some $u:M^n\times[0,T)\rightarrow\RR$ then from $\frac{\pa X_u}{\pa t}=\frac{\pa u}{\pa t}\gad|_{s=u}$ we obtain, via the inner product with $\nu(u)$, that $\hat{G}(X_u)=L(u)^{-1}\frac{\pa u}{\pa t}$ and hence $u$ satisfies (\ref{TanGenEq}). Therefore equations (\ref{TanGenEq}) and (\ref{GeneralEq}) are equivalent.

Note that because (\ref{TanGenEq}) is equivalent to (\ref{GeneralEq}), up to a tangential diffeomorphism, when $\hat{\Xi}$ is given by (\ref{XiFormEq}) we have that $V(u):=\hat{V}(X_u(M^n))$ is also a preserved quantity for (\ref{TanGenEq}). This can also be seen by using equation (\ref{EqVhatLin}) in Appendix \ref{SecWeight} to calculate the linearisation of $V(u)$:
\begin{align}\label{EqVLin}
DV(u)[w]=&D\hat{V}(X_u)[DX_u[w]]\nonumber\\
=&D\hat{V}(X_u)[\gad|_{s=u}w]\nonumber\\
=&\int_{M^n}\hat{\Xi}(X_u)\gb(\gad|_{s=u}w,\nu(u))\dmu{u}\nonumber\\
=&\int_{M^n}\hat{\Xi}(X_u)L(u)^{-1}w\dmu{u}.
\end{align}
Now by setting $w=\frac{\pa u}{\pa t}=L(u)\hat{G}(X_u)$ and using the form of $\hat{G}$ in (\ref{GhatForm}) we obtain $\frac{\pa V}{\pa t}=DV(u)\left[\frac{\pa u}{\pa t}\right]=0$.


\section{Linearisation about Geodesic Spheres in $\SS^{n+1}$}\label{SecLin}
We start by giving some standard linearisation formulas, and we suppress that quantities are to be evaluated at $X$,
\begin{lemma}\label{LemStandLin}
The components of the metric of $X(M^n)$, $g_{ij}(X):=\gb\left(\frac{\pa X}{\pa p^i},\frac{\pa X}{\pa p^j}\right)$, have the linearisation
\[
Dg_{ij}(X)[Y]=\gb\left(\nablab_{T_i}Y,T_j\right) +\gb\left(\nablab_{T_j}Y,T_i\right),
\]
where $T_i(X):=\frac{\pa X}{\pa p^i}$, and $\nablab_{T_i}Y=\frac{\pa Y}{\pa p^i}+\Gab_{\be\ga}^{\al}Y^{\be}T_i^{\ga}\pa_{\al}$. The volume element $\mu(X):=\sqrt{\det\left(g(X)\right)}$ has the linearisation
\[
D\mu(X)[Y]=g^{ij}\gb\left(\nablab_{T_i}Y,T_j\right)\mu,
\]
the unit normal of $X(M^n)$ satisfies
\[
D\nu(X)[Y]+\Gab^{\al}_{\be\ga}\nu^{\be}Y^{\ga}\pa_{\al}=-g^{ij}\gb\left(\nablab_{T_i}Y,\nu\right)T_j,
\]
the linearisation of the components of the second fundamental form, $h_{ij}(X):=\gb\left(\nablab_{T_i(X)}\nu(X),T_j(X)\right)$, is
\[
Dh_{ij}(X)[Y]=-\gb\left(\nablab_{T_i}\nablab_{T_j}Y,\nu\right) -\gb\left(\bar{R}\left(Y,T_i\right)\nu,T_j\right),
\]
and finally the linearisation of the elements of the Weingarten map, $h^i_j(X)=g^{ik}(X)h_{kj}(X)$, are given by
\[
Dh^i_j(X)[Y]=-g^{ik}\left(h^{l}_j\left(\gb\left(\nablab_{T_k}Y,T_l\right)+\gb\left(\nablab_{T_l}Y,T_k\right)\right) +\gb\left(\nablab_{T_k}\nablab_{T_j}Y,\nu\right) +\gb\left(\bar{R}\left(Y,T_k\right)\nu,T_j\right)\right).
\]
\end{lemma}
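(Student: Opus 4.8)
The plan is to realise each linearisation as an honest $\ep$-derivative: fix a variation $X_\ep$ of $X=X_0$ with $\pa_\ep X|_{\ep=0}=Y$, so that $D(\,\cdot\,)(X)[Y]=\frac{d}{d\ep}\big|_{\ep=0}(\,\cdot\,)(X_\ep)$, and then push all $\ep$-derivatives through using two workhorse facts: (i) $\nablab_{\pa_\ep}T_i=\nablab_{T_i}Y$, which holds because $\nablab$ is torsion-free and the coordinate fields $\pa_\ep$ and $\pa_{p^i}$ commute; and (ii) the compatibility of $\nablab$ with $\gb$. Here $\nablab_{\pa_\ep}$ denotes the covariant derivative along the variation, and I note at the outset that in the ambient chart $\nablab_{\pa_\ep}W=DW[Y]+\Gab^\al_{\be\ga}W^\be Y^\ga\pa_\al$; this is exactly the $\Gab$-corrected combination appearing on the left-hand side of the normal identity, which is why the third formula is most naturally read as a statement about $\nablab_{\pa_\ep}\nu$.

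For the metric I would differentiate $g_{ij}=\gb(T_i,T_j)$ directly: by (ii) and then (i),
\[
\frac{d}{d\ep}\gb(T_i,T_j)=\gb(\nablab_{\pa_\ep}T_i,T_j)+\gb(T_i,\nablab_{\pa_\ep}T_j)=\gb(\nablab_{T_i}Y,T_j)+\gb(\nablab_{T_j}Y,T_i),
\]
which is the first formula. For the volume element I would apply Jacobi's formula $\frac{d}{d\ep}\log\det g=g^{ij}\,Dg_{ij}[Y]$ to get $D\mu[Y]=\tfrac12 g^{ij}Dg_{ij}[Y]\,\mu$, and then use the symmetry of $g^{ij}$ to collapse the two terms of $Dg_{ij}[Y]$ into the single term $g^{ij}\gb(\nablab_{T_i}Y,T_j)\mu$.

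For the unit normal I would differentiate the two defining conditions. From $\gb(\nu,\nu)=1$ and (ii) one gets $\gb(\nablab_{\pa_\ep}\nu,\nu)=0$, so $\nablab_{\pa_\ep}\nu$ is tangential; writing $\nablab_{\pa_\ep}\nu=a^jT_j$ and differentiating $\gb(\nu,T_i)=0$ gives $a^jg_{ji}=\gb(\nablab_{\pa_\ep}\nu,T_i)=-\gb(\nu,\nablab_{\pa_\ep}T_i)=-\gb(\nu,\nablab_{T_i}Y)$, whence $a^j=-g^{ij}\gb(\nablab_{T_i}Y,\nu)$. Since $\nablab_{\pa_\ep}\nu$ is precisely the left-hand side of the stated identity, this yields the third formula.

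The genuinely delicate step is the second fundamental form, and it is where I expect the sign and Christoffel bookkeeping to be most error-prone. I would start from $h_{ij}=-\gb(\nu,\nablab_{T_i}T_j)$ and differentiate in $\ep$. The term $\gb(\nu,\nablab_{\pa_\ep}\nablab_{T_i}T_j)$ is handled by commuting $\nablab_{\pa_\ep}$ past $\nablab_{T_i}$: since $[\pa_\ep,T_i]=0$, the paper's curvature convention gives $\nablab_{\pa_\ep}\nablab_{T_i}Z=\nablab_{T_i}\nablab_{\pa_\ep}Z-\bar R(Y,T_i)Z$, which I apply to $Z=T_j$ together with (i). The resulting curvature term $\gb(\nu,\bar R(Y,T_i)T_j)$ is converted to $-\gb(\bar R(Y,T_i)\nu,T_j)$ by the antisymmetry of $\bar R$ in its last two slots. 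The one subtlety is the leftover term $-\gb(\nablab_{\pa_\ep}\nu,\nablab_{T_i}T_j)$, which is tangential and pairs against the Christoffel part of $\nablab_{T_i}T_j$; to dispatch it cleanly I would note that both sides of the claimed identity are tensorial in $Y$ and evaluate at the centre of induced geodesic normal coordinates about an arbitrary point, where $\Gamma^k_{ij}=0$, so that this contribution vanishes and the second covariant derivative $\nablab_{T_i}\nablab_{T_j}Y$ agrees there with the iterated derivative actually produced by the computation. Finally, the Weingarten map formula is purely algebraic: differentiating $h^i_j=g^{ik}h_{kj}$ and using $Dg^{ik}[Y]=-g^{ia}g^{kb}Dg_{ab}[Y]$ (from $g^{ik}g_{kl}=\de^i_l$), together with $g^{kb}h_{kj}=h^b_j$ and the metric and second fundamental form linearisations already obtained, assembles the stated expression after relabelling.
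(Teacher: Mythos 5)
Your proposal is correct and follows essentially the same route as the paper: the variational covariant derivative is commuted with the spatial one to produce the ambient curvature term, metric compatibility handles the metric, volume element and normal, and the Weingarten formula is assembled algebraically from $Dg^{ik}$ and $Dh_{kj}$. The only cosmetic difference is in the second-fundamental-form step, where you differentiate $-\gb\left(\nu,\nablab_{T_i}T_j\right)$ and dispose of the tangential Christoffel remainder by evaluating in normal coordinates, whereas the paper differentiates $\gb\left(\nablab_{T_i}\nu,T_j\right)$ and substitutes its already-derived formula for $D\nu(X)[Y]$; both correctly land on the covariant Hessian $\nablab_{T_i}\nablab_{T_j}Y$.
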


\begin{proof}
The variations of the metric and volume element are straight from the definitions. By taking the linearisations of the formulas
\[
\gb\left(\nu,T_i\right)=0,\ \text{and}\ \gb\left(\nu,\nu\right)=1,
\]
to obtain
\[
\gb\left(D\nu(X)[Y]+\Gab^{\al}_{\be\ga}\nu^{\be}Y^{\ga}\frac{\pa}{\pa q^{\al}},T_i\right)+\gb\left(\nu,\nablab_{T_i}Y\right)=0,\ \text{and}\ 2\gb\left(D\nu(X)[Y]+\Gab^{\al}_{\be\ga}\nu^{\be}Y^{\ga}\frac{\pa}{\pa q^{\al}},\nu\right)=0,
\]
we are able to conclude the formula for the linearisation of the unit normal.

Now we consider the linearisation of the second fundamental form:
\begin{align*}
Dh_{ij}(X)[Y]=&\gb\left(D\left(\nablab_{T_i}\nu\right)(X)[Y]+\Gab^{\al}_{\be\ga}\left(\nablab_{T_i}\nu\right)^{\be}Y^{\ga}\frac{\pa}{\pa q^{\al}},T_j\right) +\gb(\nablab_{T_i}\nu,\nablab_{T_j}Y)\\
=&\gb\left(\nablab_{T_i}\left(D\nu(X)[Y]+\Gab^{\al}_{\be\ga}\nu^{\be}Y^{\ga}\frac{\pa}{\pa q^{\al}}\right) +\bar{R}\left(T_i,Y\right)\nu,T_j\right) +\gb\left(\nablab_{T_i}\nu,\nablab_{T_j}Y\right)\\
=&\gb\left(\nablab_{T_i}\left(-g^{kl}\gb\left(\nablab_{T_k}Y,\nu\right)T_l\right),T_j\right) -\gb\left(\bar{R}\left(Y,T_i\right)\nu,T_j\right) +\gb\left(\nablab_{T_i}\nu,\nablab_{T_j}Y\right)\\
=&-\gb\left(\nablab_{T_i}\nablab_{T_j}Y,\nu\right) -\gb\left(\bar{R}\left(Y,T_i\right)\nu,T_j\right).
\end{align*}

Finally we use that $h^i_j=g^{ik}h_{kj}$ and that $Dg^{ik}(X)[Y]=-g^{ip}g^{kq}Dg_{pq}(X)[Y]$ to obtain the linearisation of the Weingarten map components
\begin{align*}
D h^i_j(X)[Y]=&-g^{ip}g^{kq}\left(\gb\left(\nablab_{T_p}Y,T_q\right) +\gb\left(\nablab_{T_q}Y,T_p\right)\right)h_{kj} -g^{ik}\left(\gb\left(\nablab_{T_k}\nablab_{T_j}Y,\nu\right) +\gb\left(\bar{R}\left(Y,T_k\right)\nu,T_j\right)\right)\\
=&-g^{ik}\left(h^{l}_j\left(\gb\left(\nablab_{T_k}Y,T_l\right)+\gb\left(\nablab_{T_l}Y,T_k\right)\right) +\gb\left(\nablab_{T_k}\nablab_{T_j}Y,\nu\right) +\gb\left(\bar{R}\left(Y,T_k\right)\nu,T_j\right)\right).
\end{align*}
\end{proof}

\begin{lemma}\label{GenLin}
\[
D\hat{G}(X)[Y]=\frac{\int_{M^n}\sum_{a=1}^n\frac{\pa F}{\pa \ka_a}(\ka(X))D\ka_a(X)[Y]\hat{\Xi}(X)-\frac{\hat{G}(X)}{\hat{\mu}(X)}D\left(\hat{\Xi}\hat{\mu}\right)(X)[Y]\,d\mu}{\int_{M^n}\hat{\Xi}(X)\,d\mu} -\sum_{a=1}^n\frac{\pa F}{\pa \ka_a}(\ka(X))D\ka_a(X)[Y],
\]
\[
D\ka_a(X_0)[w\nu_0]=-\zez_a^i\zez_a^j\left(\nablaz_i\nablaz_jw+\gb\left(\bar{R}(\nu_0,\Tz_i)\nu_0,\Tz_j\right)w\right)-\mathring{\ka}_a^2w,
\]
where $\Tz_i$ are tangent vectors, $\nablaz$ is the Levi-Civita connection, and $\zez_a$ is the principle direction (eigenvector of the Weingarten map $\mathring{\Wcal}$) corresponding to the principle curvature $\mathring{\ka}_a$ of $X_0(M^n)$. In particular if $X_0(M^n)$ is totally umbilic we have
\[
\sum_{a=1}^n\frac{\pa F}{\pa \ka_a}(\mathring{\ka})D\ka_a(X_0)[w\nu_0]=-\frac{\pa F}{\pa \ka_1}(\mathring{\ka})\left(\De_{\gz}w+|\mathring{\Wcal}|_{\gz}^2w+\bar{Ric}(\nu_0,\nu_0)w\right),
\]
where $\gz$ is the metric of $X_0(M^n)$.
\end{lemma}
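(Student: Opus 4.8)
The plan is to establish the three formulas in sequence, drawing on the linearisations already collected in Lemma~\ref{LemStandLin}.

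For the first formula I would apply the quotient rule to $\hat G=N/D-F(\ka)$, where $N=\int_{M^n}F(\ka)\hat\Xi\dmu{}$ and $D=\int_{M^n}\hat\Xi\dmu{}$. Writing the area element as $\dmu{}=\hat\mu\,dp$ relative to a fixed background measure on $M^n$, the product rule together with $D\hat\mu(X)[Y]=g^{ij}\gb(\nablab_{T_i}Y,T_j)\hat\mu$ from Lemma~\ref{LemStandLin} gives
\[
DN[Y]=\int_{M^n}\Big(\sum_{a=1}^n\tfrac{\pa F}{\pa\ka_a}D\ka_a[Y]\,\hat\Xi+\tfrac{F(\ka)}{\hat\mu}D(\hat\Xi\hat\mu)[Y]\Big)\dmu{},\qquad DD[Y]=\int_{M^n}\tfrac1{\hat\mu}D(\hat\Xi\hat\mu)[Y]\dmu{}.
\]
Then $D\hat G[Y]=\frac1D DN[Y]-\frac{N}{D^2}DD[Y]-\sum_a\frac{\pa F}{\pa\ka_a}D\ka_a[Y]$. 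The only manipulation is to observe that $N/D$ is constant on $M^n$ and equals $\hat G+F(\ka)$, so the two terms carrying $D(\hat\Xi\hat\mu)[Y]$ collapse to $-\frac1D\int_{M^n}\frac{\hat G}{\hat\mu}D(\hat\Xi\hat\mu)[Y]\dmu{}$, which is exactly the claimed expression.

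For the variation of a principal curvature I would use the first-order perturbation formula for a simple eigenvalue of the $g$-self-adjoint Weingarten map: if $\zez_a$ is the $g$-unit eigenvector for $\mathring{\ka}_a$, then $D\ka_a(X_0)[Y]=g_{ik}\zez_a^i\zez_a^j\,Dh^k_j(X_0)[Y]$, the left eigenvector being $g_{ik}\zez_a^i$; since $Dh^k_j$ is the full variation of the Weingarten endomorphism (already incorporating the variation of $g^{-1}$), no extra terms arise. Substituting $Dh^k_j$ from Lemma~\ref{LemStandLin} with $Y=w\nu_0$ and expanding via Gauss--Weingarten, $\nablab_{T_k}(w\nu_0)=(\nabla_kw)\nu_0+wh^m_kT_m$, one uses $\gb(\nablab_{T_k}(w\nu_0),T_l)=wh_{kl}$, the second-order identity $\gb(\nablab_{T_k}\nablab_{T_j}(w\nu_0),\nu_0)=\nablaz_k\nablaz_jw-wh^m_jh_{km}$, and the eigen-relations $h^m_j\zez_a^j=\mathring{\ka}_a\zez_a^m$, $g_{ij}\zez_a^i\zez_a^j=1$. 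The decisive point is the accounting of the curvature-squared terms: the metric-variation part of $Dh^k_j$ contributes $-2w\mathring{\ka}_a^2$ and the $-wh^m_jh_{km}$ piece of the Hessian term contributes $+w\mathring{\ka}_a^2$, netting $-\mathring{\ka}_a^2w$; everything else assembles into $-\zez_a^i\zez_a^j(\nablaz_i\nablaz_jw+\gb(\bar R(\nu_0,\Tz_i)\nu_0,\Tz_j)w)$.

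For the totally umbilic specialisation all $\mathring{\ka}_a$ coincide, so by symmetry of $F$ we have $\frac{\pa F}{\pa\ka_a}(\mathring{\ka})=\frac{\pa F}{\pa\ka_1}(\mathring{\ka})$ for every $a$ and this factor pulls out of the sum. Since the Weingarten map is a multiple of the identity, any $\gz$-orthonormal frame is an eigenframe, and I would invoke the completeness relation $\sum_a\zez_a^i\zez_a^j=g^{ij}$ to turn the directional quantities into traces: $\sum_a\zez_a^i\zez_a^j\nablaz_i\nablaz_jw=\De_\gz w$, while $g^{ij}\gb(\bar R(\nu_0,\Tz_i)\nu_0,\Tz_j)=\bar{Ric}(\nu_0,\nu_0)$ (the normal direction contributing nothing to the trace) and $\sum_a\mathring{\ka}_a^2=|\mathring{\Wcal}|_\gz^2$. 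Collecting these yields the stated identity. The main obstacle I anticipate is twofold. First, the curvature-squared bookkeeping in the second formula is delicate: a dropped factor of two or a sign slip between the metric-variation and Hessian contributions changes the final coefficient of $\mathring{\ka}_a^2w$. Second, at a totally umbilic surface the individual eigenvalues are not smooth functions of the embedding, so the pointwise formula for $D\ka_a$ must be read as the limit of the simple-eigenvalue formula; it is only the symmetric combination $\sum_a\frac{\pa F}{\pa\ka_a}D\ka_a=D(F\circ\ka)$ that is unambiguously defined there, and the completeness relation is precisely what renders the umbilic formula independent of the non-unique choice of eigenframe.
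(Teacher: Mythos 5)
Your proposal is correct and follows the same route as the paper's (very terse) proof: the quotient rule applied to the definition of $\hat G$, the simple-eigenvalue perturbation formula $D\ka_a=\zez_a^i\zez_a^jg_{jk}Dh^k_i$ combined with the Weingarten linearisation of Lemma~\ref{LemStandLin}, and the completeness relation $\sum_a\zez_a^i\zez_a^j=g^{ij}$ in the umbilic case; your bookkeeping of the $-\mathring{\ka}_a^2w$ term ($-2w\mathring{\ka}_a^2$ from the metric variation plus $+w\mathring{\ka}_a^2$ from the Hessian term) checks out. Your closing remark that only the symmetric combination $\sum_a\frac{\pa F}{\pa\ka_a}D\ka_a=D(F\circ\ka)$ is unambiguous at an umbilic point is a genuine subtlety the paper leaves implicit, and it is handled correctly here.
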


\begin{proof}
The first formula follows directly from the definition of $\hat{G}(X)$, while the second formula follows from $D\ka_a(X)[Y]=\ze^i(X)\ze^j(X)g_{jk}Dh_{i}^k(X)[Y]$ and Lemma \ref{LemStandLin}.
\end{proof}

Now we consider $N^{n+1}=\SS^{n+1}$ with coordinates $q_{\al}$, $\al=1,\ldots,n+1$, with $q_{1}\in[0,2\pi)$ and $q_{\al}\in[0,\pi)$ for $\al=2,\ldots,n+1$, such that the metric is
\[
\gb=g_{\SS^{n+1}}=\sum_{\al=1}^{n+1}\prod_{\be=\al+1}^{n+1}\sin(q_{\be})^2\,dq^{\al}{}^2,
\]
and $X_0(M^n)$ equal to the $n$-sphere $q_{n+1}=\theta$, for some fixed $\theta\in(0,\pi)$, which we denote $\Scal_{\theta}$. With this set up normal graphs take the form
\begin{equation}\label{SphereGraph}
X_u(p)=\left(p_1,\ldots,p_n,\theta+u(p)\right),
\end{equation}
where $p=(p_1,\ldots,p_n)\in\SS^n$ and $C_0=\min(\te,\pi-\te)$. The tangent vectors, metric, and Weingarten map for $X_0(M^n)$ are then
\[
\Tz_i=\de^{\al}_i\frac{\pa}{\pa q^{\al}},
\]
\[
\gz_{ij}=\sin(\theta)^2g_{\SS^n}=\sin(\theta)^2\sum_{i=1}^{n}\prod_{j=i+1}^{n}\sin(p_j)^2\,dp^i{}^2,
\]
\[
\mathring{\Wcal}=\cot(\theta)Id.
\]

\begin{lemma}\label{Weight0}
\[
\Xi_a(X_0)=\left\{\begin{array}{ll} \frac{\cot(\theta)^{n-a-1}}{n+1}\left(a\cot(\te)^2-n+a\right) & \text{if }a=0,\ldots,n,\\ 1 & \text{if }a=n+1.\end{array}\right.
\]
\end{lemma}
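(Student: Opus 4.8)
The $a=n+1$ case is immediate from the definition, so I focus on $a=0,\ldots,n$. The plan is to substitute the explicit geometry of the totally umbilic sphere $\Scal_\te$ into each of the three pieces of (\ref{XiaFormEq}) and simplify. The only data needed are that every principal curvature of $X_0(M^n)$ equals $\cot(\te)$, so that $\mathring{\Wcal}=\cot(\te)\,\mathrm{Id}$, together with the elementary symmetric values $E_k(X_0)=\binom{n}{k}\cot(\te)^k$, which follow by counting the $\binom{n}{k}$ equal products. In particular $E_{n+1-a}(X_0)=\binom{n}{n+1-a}\cot(\te)^{n+1-a}$, and the binomial identity $\binom{n}{n+1-a}\big/\binom{n+1}{n+1-a}=\frac{a}{n+1}$ turns the last term of (\ref{XiaFormEq}) into $\frac{a}{n+1}\cot(\te)^{n+1-a}$.

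Next I would evaluate the Newton tensor $\frac{\pa E_{n-a}}{\pa h^i_j}$ at $X_0$. Since $\frac{\pa E_{n-a}}{\pa\ka_b}$ equals the $(n-a-1)$-th elementary symmetric function of the remaining $n-1$ curvatures, at the umbilic point this is the common value $\binom{n-1}{n-a-1}\cot(\te)^{n-a-1}$ for every $b$, so the matrix derivative is $\frac{\pa E_{n-a}}{\pa h^i_j}\big|_{X_0}=\binom{n-1}{n-a-1}\cot(\te)^{n-a-1}\,\de^j_i$, a constant multiple of the identity. This observation does the bulk of the work: because the coefficient is a spatial constant and the identity $(1,1)$-tensor is parallel, the divergence-type term $\nabla_j\nabla_k\!\left(\frac{\pa E_{n-a}}{\pa h^i_j}\right)$ vanishes identically on $X_0$, removing it from (\ref{XiaFormEq}).

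It remains to handle the curvature term. Here I use that $\SS^{n+1}$ has constant sectional curvature one, so that with the sign convention $\bar{R}(U,W)Z=\nablab_W\nablab_U Z-\nablab_U\nablab_W Z+\nablab_{[U,W]}Z$ fixed in the introduction one gets $\bar{R}(\nu,T_k)\nu=\gb(\nu,\nu)T_k-\gb(T_k,\nu)\nu=T_k$, hence $\gb(\bar{R}(\nu,T_k)\nu,T_j)=g_{kj}$. Contracting against the Newton tensor then gives $g^{ik}\frac{\pa E_{n-a}}{\pa h^i_j}g_{kj}=\binom{n-1}{n-a-1}\cot(\te)^{n-a-1}\,g^{jk}g_{kj}=n\binom{n-1}{n-a-1}\cot(\te)^{n-a-1}$. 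Using $n\binom{n-1}{n-a-1}\big/\binom{n}{n-a}=n-a$, the first term of (\ref{XiaFormEq}) collapses to $-\frac{n-a}{n+1}\cot(\te)^{n-a-1}$, and adding the two surviving contributions and factoring out $\frac{\cot(\te)^{n-a-1}}{n+1}$ yields exactly $\frac{\cot(\te)^{n-a-1}}{n+1}\left(a\cot(\te)^2-n+a\right)$.

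The only genuinely delicate points are fixing the sign convention of $\bar{R}$ correctly — a useful cross-check is that the same convention reproduces $g^{ij}\gb(\bar{R}(\nu_0,\Tz_i)\nu_0,\Tz_j)=\bar{Ric}(\nu_0,\nu_0)=n$ appearing in Lemma \ref{GenLin} — and verifying the two binomial simplifications; everything else is routine substitution, with the vanishing of the covariant-derivative term being the one place where total umbilicity is essential.
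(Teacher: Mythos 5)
Your proof is correct and follows essentially the same route as the paper: the constant term, the vanishing of the $\nabla\nabla$ term by parallelism of the Weingarten map, the identity $\gb(\bar{R}(\nu_0,\Tz_k)\nu_0,\Tz_j)=\gz_{kj}$ from constant curvature, and the same binomial simplifications. The only cosmetic difference is that you evaluate the Newton tensor at the umbilic point directly as $\binom{n-1}{n-a-1}\cot(\te)^{n-a-1}\de^j_i$ via the eigenvalue derivative, whereas the paper reaches the same value through the alternating-sum formula (\ref{EDeriv}); both are routine and equivalent.
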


\begin{proof}
This follows from a straightforward calculation using (\ref{XiFormEq}). Firstly we note that since $\nablaz_k\hz^i_j=0$ we have
\[
\nablaz_j\nablaz_k\left(\frac{\pa E_{n-a}}{\pa h^i_j}(\mathring{\ka})\right)=\nablaz_j\left(\frac{\pa^2 E_{n-a}}{\pa h^i_j\pa h^p_q}(\mathring{\ka})\nablaz_k\hz^p_q\right)=0.
\]
Next we use that $\gb\left(\bar{R}(\nu_0,\Tz_k)\nu_0,\Tz_j\right)=\gz_{kj}$, $E_a(\mathring{\ka})=\binom{n}{a}\cot(\theta)^a$, and the formula (see Proposition B.0.2. in \cite{HartleyPhD} for example)
\begin{equation}\label{EDeriv}
\frac{\pa E_a}{\pa h^i_j}(\ka)=\sum_{b=0}^{a-1}(-1)^b\left(\Wcal^b\right)_i^jE_{a-1-b}(\ka),
\end{equation}
to calculate the remaining terms:
\begin{align*}
\Xi_a(X_0)=&\frac{-\gz^{ik}}{(n+1)\binom{n}{n-a}}\frac{\pa E_{n-a}}{\pa h^i_j}(\mathring{\ka})\gz_{kj} +\frac{E_{n-a+1}(\mathring{\ka})}{\binom{n+1}{n-a+1}}\\
=&\frac{-\de^i_j}{(n+1)\binom{n}{n-a}}\sum_{b=0}^{n-a-1}(-1)^b\left(\mathring{\Wcal}^b\right)_i^jE_{n-a-1-b}(\mathring{\ka}) +\frac{\binom{n}{n+1-a}\cot(\theta)^{n+1-a}}{\binom{n+1}{n+1-a}}\\
=&\frac{-1}{(n+1)\binom{n}{n-a}}\sum_{b=0}^{n-a-1}(-1)^b\text{tr}\left(\mathring{\Wcal}^b\right)\binom{n}{n-a-1-b}\cot(\theta)^{n-a-1-b} +\frac{a}{n+1}\cot(\theta)^{n+1-a}\\
=&\frac{-n}{(n+1)\binom{n}{n-a}}\sum_{b=0}^{n-a-1}(-1)^b\binom{n}{n-a-1-b}\cot(\theta)^{n-a-1} +\frac{a}{n+1}\cot(\theta)^{n+1-a}\\
=&\frac{a-n}{n+1}\cot(\theta)^{n-a-1} +\frac{a}{n+1}\cot(\theta)^{n+1-a}.
\end{align*}
\end{proof}

\begin{remark}
\begin{itemize}
	\item For our purposes the important thing here is that each $\Xi_a(X_0)$ is a constant and, hence, the form (\ref{XiFormEq}) will be allowable in our theorem, provided $c_a\in\RR$, $a=0,\ldots,n+1$, are such that $\hat{\Xi}(X_0)\neq0$.
	\item It should be noted that we have $\Xi_a(X_0)=0$ if and only if $\theta\in\{\theta_a,\frac{\pi}2,\pi-\theta_a\}$, where $\theta_a=\arcsin\left(\sqrt{\frac{a}{n}}\right)$, except in the cases of $a=n-1$ when it is if and only if $\theta\in\{\theta_a,\pi-\theta_a\}$ and $a=n+1$ when it is never zero.
	\item As suggested in \cite{Gao03} a better intrinsic volume to use in spherical space may be
	\[ \hat{U}_a(\Om)=\frac{\Ga(n+2)}{2^{n+2}\pi^{\frac{n}{2}}}\sum_{b=0}^{\left\lfloor\frac{n+1-a}{2}\right\rfloor}\frac{\hat{V}_{a+2b}(\Om)}{\Ga\left(\frac{a+2b}{2}+1\right)\Ga\left(\frac{n-a-2b}{2}+1\right)},\]
	where $\Ga$ is the usual Gamma function, for which the linearisation is
	\[ D\hat{U}_a(X)[Y]=\int_{M^n}\hat{Z}_a(X)\gb(Y,\nu)\dmu,\]
	where
	\[\hat{Z}_a(X)=\frac{\Ga(n+2)}{2^{n+2}\pi^{\frac{n}{2}}}\sum_{b=0}^{\left\lfloor\frac{n+1-a}{2}\right\rfloor}\frac{\hat{\Xi}_{a+2b}(X)}{\Ga\left(\frac{a+2b}{2}+1\right)\Ga\left(\frac{n-a-2b}{2}+1\right)}.\]
	In this case the linearisation functions at $X_0$ have the simpler form
	\[ \hat{Z}_a(X_0)=\frac{a\Ga(n+1)\cos(\te)^{n+1-a}}{2^{n+2}\pi^{\frac{n}{2}}\Ga\left(\frac{a}{2}+1\right)\Ga\left(\frac{n-a}{2}+1\right)},\]
	and is zero if and only if $\te=\frac{\pi}{2}$ and $a\neq n+1$.
\end{itemize}
\end{remark}

\begin{lemma}
We assume that $\hat{\Xi}(X_0)=const\neq0$, then
\[
DG(0)[w]=\frac{\pa F}{\pa \ka_1}(\mathring{\ka})\sin(\theta)^{-2}\left(\De_{g_{\SS^n}}w+nw-\fint_{\SS^n}w\dmu{0}\right)
\]
\end{lemma}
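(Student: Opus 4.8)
The plan is to differentiate $G(u)=L(u)\hat{G}(X_u)$ at $u=0$ by the product rule and then exploit that the base surface $\Scal=\Scal_\theta$ is a totally umbilic round sphere. Two facts collapse the product rule to a single term. First, since $\nabla 0=0$ we have $L(0)=1$, and the term $DL(0)[w]$ is irrelevant because it multiplies $\hat{G}(X_0)$. Second, $\hat{G}(X_0)=0$: on $\Scal_\theta$ the principal curvatures equal the constants $\mathring{\ka}_a=\cot\theta$, so $F(\ka)$ is constant over $M^n$ and its $\hat{\Xi}$-weighted average coincides with its pointwise value. Combined with $DX_0[w]=w\,\gad|_{s=0}=w\nu_0$ (read off from $X_u(p)=\ga(p,u(p))$), this gives $DG(0)[w]=D\hat{G}(X_0)[w\nu_0]$.

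Next I would substitute $Y=w\nu_0$ into the formula of Lemma \ref{GenLin}. Because $\hat{G}(X_0)=0$, the middle integrand $\frac{\hat{G}}{\hat{\mu}}D(\hat{\Xi}\hat{\mu})[Y]$ drops out. Writing $\Psi(w):=\sum_{a=1}^n\frac{\pa F}{\pa\ka_a}(\mathring{\ka})D\ka_a(X_0)[w\nu_0]$ and using the hypothesis $\hat{\Xi}(X_0)=const\neq0$ to factor the weight out of both integrals, the surviving terms reduce to $D\hat{G}(X_0)[w\nu_0]=\fint_{M^n}\Psi(w)\dmu{0}-\Psi(w)$.

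The third step is to evaluate $\Psi(w)$ on the umbilic sphere via the second displayed identity of Lemma \ref{GenLin}, namely $\Psi(w)=-\frac{\pa F}{\pa\ka_1}(\mathring{\ka})\left(\De_{\gz}w+|\mathring{\Wcal}|_{\gz}^2w+\bar{Ric}(\nu_0,\nu_0)w\right)$. Here I would insert the explicit geometry of $\Scal_\theta$: from $\gz=\sin(\theta)^2 g_{\SS^n}$ the Laplacians satisfy $\De_{\gz}=\sin(\theta)^{-2}\De_{g_{\SS^n}}$; from $\mathring{\Wcal}=\cot(\theta)Id$ we get $|\mathring{\Wcal}|_{\gz}^2=n\cot(\theta)^2$; and since $\SS^{n+1}$ has constant sectional curvature one, $\bar{Ric}(\nu_0,\nu_0)=n$. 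The zeroth-order coefficient then collapses through $n\cot(\theta)^2+n=n\sin(\theta)^{-2}$, so that $\Psi(w)=-\frac{\pa F}{\pa\ka_1}(\mathring{\ka})\sin(\theta)^{-2}\big(\De_{g_{\SS^n}}w+nw\big)$.

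Finally I would perform the averaging. On the closed manifold $\SS^n$ the divergence theorem gives $\int\De_{g_{\SS^n}}w\dmu{0}=0$, and the constant factor $\sin(\theta)^n$ relating $\dmu{0}$ to $d\mu_{g_{\SS^n}}$ cancels in the normalised mean, so $\fint_{M^n}\Psi(w)\dmu{0}$ retains only the mean of the $nw$ term; subtracting $\Psi(w)$ then produces the claimed mean-corrected Laplacian operator. I expect the obstacle to be bookkeeping rather than ideas: pinning down the sphere's Ricci curvature, the conformal rescaling of the Laplacian, and the consistent treatment of the averaging of the zeroth-order term. It is worth remarking that the subtracted mean term vanishes on the subspace $\{\,\fint_{\SS^n}w\dmu{0}=0\,\}$, which by (\ref{EqVLin}) (with $\hat{\Xi}(X_0)$ constant and $L(0)=1$) is exactly the tangent space to the level set of the preserved quantity $V$; on this constraint space $DG(0)$ is simply $\frac{\pa F}{\pa\ka_1}(\mathring{\ka})\sin(\theta)^{-2}(\De_{g_{\SS^n}}+n)$, whose kernel consists of the $\ell=0,1$ spherical harmonics parametrising the nearby geodesic spheres, so the precise normalisation of the mean term is immaterial for the stability analysis.
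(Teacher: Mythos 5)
Your argument is correct and follows the paper's own proof essentially step for step: the $DL$ term and the $\hat{G}(X_0)$-weighted term drop out, the constancy of $\hat{\Xi}(X_0)$ reduces $D\hat{G}(X_0)[w\nu_0]$ to $\fint\Psi(w)\dmu{0}-\Psi(w)$, and the umbilic identity of Lemma \ref{GenLin} together with $\De_{\gz}=\sin(\theta)^{-2}\De_{g_{\SS^n}}$, $|\mathring{\Wcal}|_{\gz}^2=n\cot(\theta)^2$ and $\bar{Ric}(\nu_0,\nu_0)=n$ gives the result. Note that your computation, exactly like the last line of the paper's own proof, produces $n\fint_{\SS^n}w\dmu{0}$ rather than $\fint_{\SS^n}w\dmu{0}$; the missing factor of $n$ in the displayed statement of the lemma is a typo in the paper (the factor $n$ is needed for Corollary \ref{LinProp} to hold), not a defect of your argument.
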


\begin{proof}
To calculate $DG(0)$ we first note that $DL(0)[w]=0$ and $DX_u|_{u=0}[w]=w\nu_0$, so that
\begin{align*}
DG(0)[w]=&D\hat{G}(X_0)[w\nu_0]\\
=&\frac{\int_{\SS^n}\sum_{a=1}^n\frac{\pa F}{\pa \ka_a}(\mathring{\ka})D\ka_a(X_0)[w\nu_0]\hat{\Xi}(X_0)-\frac{\hat{G}(X_0)}{\hat{\mu}(X_0)}D\left(\hat{\Xi}\hat{\mu}\right)(X_0)[w\nu_0]\dmu{0}}{\int_{\SS^n}\hat{\Xi}(X_0)\dmu{0}}-\sum_{a=1}^n\frac{\pa F}{\pa \ka_a}(\mathring{\ka})D\ka_a(X_0)[w\nu_0]\\
=&\frac{\pa F}{\pa \ka_1}(\mathring{\ka})\left(\De_{\gz}w+|\mathring{\Wcal}|_{\gz}^2w+\bar{Ric}(\nu_0,\nu_0)w\right) -\fint_{\SS^n}\frac{\pa F}{\pa \ka_1}(\mathring{\ka})\left(\De_{\gz}w+|\mathring{\Wcal}|_{\gz}^2w+\bar{Ric}(\nu_0,\nu_0)w\right)\dmu{0}\\
=&\frac{\pa F}{\pa \ka_1}(\mathring{\ka})\left(\De_{\gz}w+|\mathring{\Wcal}|_{\gz}^2w+\bar{Ric}(\nu_0,\nu_0)w -\fint_{\SS^n}\De_{\gz}w+|\mathring{\Wcal}|_{\gz}^2w+\bar{Ric}(\nu_0,\nu_0)w\dmu{0}\right)\\
=&\frac{\pa F}{\pa \ka_1}(\mathring{\ka})\left(\sin(\theta)^{-2}\De_{g_{\SS^n}}w+n\cot(\theta)^2w+nw -\fint_{\SS^n}n\cot(\theta)^2w+nw\dmu{0}\right)\\
=&\frac{\pa F}{\pa \ka_1}(\mathring{\ka})\sin(\theta)^{-2}\left(\De_{g_{\SS^n}}w+nw-n\fint_{\SS^n}w\dmu{0}\right)
\end{align*}
\end{proof}

\begin{corollary}\label{LinProp}
Let $c_a\in\RR$, $a=0,\ldots,n+1$, be such that $\hat{\Xi}(X_0)=const\neq0$, and the smooth, symmetric function $F$ be such that $\frac{\pa F}{\pa \ka_1}(\mathring{\ka})>0$, then
\[
\sup\left\{\la:\la\in\si(DG(0))\backslash\{0\}\right\}<0,
\]
and $0$ is an eigenvalue of $DG(0)$ with multiplicity $n+2$ and eigenfunctions given by the first order spherical harmonics on $\SS^n$, $Y_1^{(n)},\ldots,Y_{n+1}^{(n)}$, and the constant function.
\end{corollary}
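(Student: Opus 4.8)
The plan is to read the result directly off the explicit form of $DG(0)$ established in the previous lemma, reducing everything to the classical spectral theory of the Laplacian on the round sphere. Writing $c:=\frac{\pa F}{\pa \ka_1}(\mathring{\ka})\sin(\theta)^{-2}$, which is a strictly positive constant under our hypotheses, that lemma gives $DG(0)[w]=c\left(\De_{g_{\SS^n}}w+nw-n\fint_{\SS^n}w\dmu{0}\right)$. Since $DG(0)$ is a self-adjoint second order elliptic operator on the closed manifold $\SS^n$---namely $c\De_{g_{\SS^n}}$ perturbed by the bounded self-adjoint term $c(nI-nP)$, where $P$ is the $L^2$-orthogonal projection onto the constants---it has compact resolvent and hence purely discrete, real spectrum, so it suffices to locate its eigenvalues.

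First I would recall that the eigenvalues of $\De_{g_{\SS^n}}$ on the round $n$-sphere are $-\la_k$ with $\la_k=k(k+n-1)$, $k=0,1,2,\dots$, the corresponding eigenspace being the space $\mathcal{H}_k$ of spherical harmonics of degree $k$; in particular $\mathcal{H}_0$ is spanned by the constants and $\mathcal{H}_1=\mathrm{span}\{Y_1^{(n)},\dots,Y_{n+1}^{(n)}\}$ has dimension $n+1$. Decomposing an arbitrary $w$ as $w=\sum_{k\geq0}w_k$ with $w_k\in\mathcal{H}_k$, the mean value $\fint_{\SS^n}w\dmu{0}$ is exactly the degree-zero component $w_0$, so the nonlocal term only affects $\mathcal{H}_0$.

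The key step is the eigenspace-by-eigenspace computation. On $\mathcal{H}_k$ with $k\geq1$ the operator acts as multiplication by $c(n-\la_k)$, while on $\mathcal{H}_0$ the $-n\fint w$ term makes it act as $c(n-\la_0-n)=0$. Since $\la_1=n$ the degree-one piece is also annihilated, so the kernel is exactly $\mathcal{H}_0\oplus\mathcal{H}_1$, of dimension $1+(n+1)=n+2$, with the stated eigenfunctions. For $k\geq2$ one has $\la_k\geq\la_2=2(n+1)>n$, so the eigenvalues $c(n-\la_k)$ are strictly negative and never vanish; the largest is attained at $k=2$, equal to $c(n-2(n+1))=-c(n+2)<0$. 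This gives $\sup\{\la:\la\in\si(DG(0))\backslash\{0\}\}=-c(n+2)<0$, completing the proof.

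I do not anticipate any real obstacle here: once the formula for $DG(0)$ is in hand the argument is a direct spectral computation. The only points requiring care are fixing the sign convention for the Laplacian so that its eigenvalues are the $-k(k+n-1)$, and the bookkeeping of the averaging term, which must be checked to annihilate the constants precisely---so that $\mathcal{H}_0$ falls into the kernel rather than contributing a spurious positive eigenvalue of size $cn$.
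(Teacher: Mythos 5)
Your proof is correct and is exactly the argument the paper leaves implicit: the corollary is stated as an immediate consequence of the preceding lemma, and the spherical-harmonic decomposition you use --- eigenvalues $c(n-k(n+k-1))$ for $k\geq 2$, kernel $\mathcal{H}_0\oplus\mathcal{H}_1$ of dimension $n+2$, spectral gap $-c(n+2)$ --- is the standard way to read off the spectrum, with the $L^2$ and $h^{2,\al}\to h^{0,\al}$ spectra agreeing by elliptic regularity on the compact manifold $\SS^n$. You have also rightly worked with the form $DG(0)[w]=c\left(\De_{g_{\SS^n}}w+nw-n\fint_{\SS^n}w\dmu{0}\right)$ appearing in the last line of the lemma's proof; the lemma's displayed statement drops the factor $n$ in front of the average, and that version would incorrectly assign the positive eigenvalue $c(n-1)$ to the constants.
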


\begin{remark}
Our coordinates give the following formula for the spherical harmonics
\[
Y_i^{(n)}(p)=\prod_{j=i}^{n}\sin(p_j)\cos(p_{i-1}).
\]
\end{remark}


\section{Space of Geodesic Spheres in $\SS^{n+1}$}\label{SecSpheres}
In this section we consider how to parameterise the space of geodesic spheres as normal geodesic graphs over a particular sphere. That is we look to find the space of functions such that $X_u$ is a sphere near $X_0$ for any $u$ in the space, and that all spheres near $X_0$ are accounted for. To find the parametrisation we embed $\SS^{n+1}$ in $\RR^{n+2}$ using the first order spherical harmonics on $\SS^{n+1}$:
\[
Z(q)=\left(Y_{1}^{(n+1)}(q),\ldots,Y_{n+2}^{(n+1)}(q)\right).
\]
The image of $X_0$ lies in the plane $x_{n+2}=\cos(\theta)$, so any non-perpendicular sphere is determined by the values $b_i$, $i=1,\ldots,n+2$, such that it lies in the plane
\[
x_{n+2}+\sum_{i=1}^{n+1}b_ix_i=\sqrt{1+|b|^2}(\cos(\te)+b_{n+2}),
\]
for $(b_1,\ldots,b_{n+2})\in\RR^{n+1}\times(-1-\cos(\te),1-\cos(\te))$, where $|b|=\left(\sum_{i=1}^{n+1}b_i^2\right)^{\frac12}$. The radius of the sphere is given by $\tilde{R}=\tilde{R}(b_{n+2})=\sqrt{\sin(\te)^2-2\cos(\te)b_{n+2}-b_{n+2}^2}$. Using the form of the normal graph (\ref{SphereGraph}) we find that $u$ satisfies
\[
\cos(\theta+u(p))+\sin(\theta+u(p))\sum_{i=1}^{n+1}b_i\prod_{j=i}^{n}\sin(p_j)\cos(p_{i-1})=\sqrt{1+|b|^2}(\cos(\te)+b_{n+2}),
\]
so by using the formula for the spherical harmonics on $\SS^n$ and solving for $u$ we obtain that the graph functions for the spheres are given by

\begin{equation}\label{uSphereForm}
u_b=\arctan\left(\frac{\sum_{i=1}^{n+1}b_iY_i^{(n)}+\sqrt{1+|b|^2}(\cos(\te)+b_{n+2})\sqrt{1+\left(\sum_{i=1}^{n+1}b_iY_i^{(n)}\right)^2-(1+|b|^2)(\cos(\theta)+b_{n+2})^2}}{(1+|b|^2)(\cos(\theta)+b_{n+2})^2-\left(\sum_{i=1}^{n+1}b_iY_i^{(n)}\right)^2}\right)-\theta,
\end{equation}
where $\arctan$ is defined such that $\arctan:\RR\rightarrow[0,\pi)$ and we require $|b|^2<\frac{1}{(\cos(\te)+b_{n+2})^2}-1$. This requirement means that the geodesic spheres considered divide the poles $q_{n+1}=0$ and $q_{n+1}=\pi$.

Note $u_0=0$ so this gives the base geodesic sphere, and if we linearise, with respect to the parameters, at the base sphere we have
\[
Du_b|_{b=0}[z]=\sum_{j=1}^{n+1}z_jY_j^{(n)}+\frac{1}{\sin(\te)}z_{n+2}.
\]

We have thus proved the following
\begin{lemma}\label{SphereSpace}
The space of graph functions defining a sphere non-perpendicular to $X_0\left(M^n\right)$ and separating the poles is given by
\[
\mathscr{S}:=\{u_b\in C^0(\SS^n):b\in\RR^{n+1}\times(-1-\cos(\te),1-\cos(\te),\ |b|^2<\frac{1}{(\cos(\te)+b_{n+2})^2}-1\}
\]
where $u_b$ is defined as in (\ref{uSphereForm}). Further, at $b=0$ it has the tangent space $T_0\mathscr{S}=\text{span}\left(\{Y_i^{(n)}\in C^0(\SS^n):i=1,\ldots,n+1\}\cup\{1\}\right)$ and $\mathscr{S}$ is locally a differentiable graph over it.
\end{lemma}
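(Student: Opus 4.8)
The plan is to use the classical fact that the non-empty transverse intersections of $\SS^{n+1}\subset\RR^{n+2}$ with affine hyperplanes are precisely the geodesic spheres, and to parametrise these hyperplanes in a way adapted to the base sphere $\Scal_\te$. Since $\Scal_\te$ lies in the plane $x_{n+2}=\cos(\te)$, whose normal is $e_{n+2}$, a geodesic sphere is non-perpendicular to $X_0(M^n)$ exactly when its hyperplane has nonzero $x_{n+2}$-coefficient; normalising that coefficient to $1$, every such hyperplane is uniquely of the form $x_{n+2}+\sum_{i=1}^{n+1}b_ix_i=c$. I would then reparametrise the offset by $c=\sqrt{1+|b|^2}(\cos(\te)+b_{n+2})$, a diffeomorphism of the $c$-variable chosen so that $b=0$ recovers the plane $x_{n+2}=\cos(\te)$ of $\Scal_\te$. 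As the signed distance of this hyperplane from the origin equals $\cos(\te)+b_{n+2}$, its section is a nonempty, nondegenerate geodesic sphere of radius $\sqrt{1-(\cos(\te)+b_{n+2})^2}=\sqrt{\sin(\te)^2-2\cos(\te)b_{n+2}-b_{n+2}^2}$ precisely when $|\cos(\te)+b_{n+2}|<1$, i.e. for $b_{n+2}\in(-1-\cos(\te),1-\cos(\te))$.

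Next I would identify the second inequality as the pole-separation condition. Evaluating the affine function $x\mapsto x_{n+2}+\sum_ib_ix_i-c$ at the two poles $\pm e_{n+2}$ gives the values $1-c$ and $-1-c$; the sphere separates the poles iff these have opposite signs, i.e. iff $c^2<1$, which after substituting $c^2=(1+|b|^2)(\cos(\te)+b_{n+2})^2$ is exactly $|b|^2<\frac1{(\cos(\te)+b_{n+2})^2}-1$. Since the normal geodesics of $\Scal_\te$ are the meridians $\{p=\mathrm{const}\}$ in the $q_{n+1}$-direction, a geodesic sphere separating the poles meets each such meridian exactly once and is therefore a normal geodesic graph over $\Scal_\te$; this identifies $\mathscr S$ with the stated domain. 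To make the graph function explicit I substitute the coordinate form $X_u(p)=(p_1,\dots,p_n,\te+u(p))$ of (\ref{SphereGraph}) into the hyperplane equation. Using the coordinate expression of the embedding $Z$ one reads off $x_{n+2}=\cos(\te+u)$ and $x_i=\sin(\te+u)Y_i^{(n)}(p)$ for $i\le n+1$, so the equation becomes $\cos(\te+u)+\sin(\te+u)\sum_{i=1}^{n+1}b_iY_i^{(n)}=c$.

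Writing $S=\sum_{i=1}^{n+1}b_iY_i^{(n)}$ and $\psi=\te+u$, this reads $\cos\psi+S\sin\psi=c$, which together with $\sin^2\psi+\cos^2\psi=1$ I would solve as a quadratic for $\sin\psi$ and $\cos\psi$. Selecting the root whose angle lies in $(0,\pi)$, i.e. the branch compatible with separating the poles, and forming $\tan\psi=\sin\psi/\cos\psi$ yields, after rationalising, $\tan(\te+u_b)=\frac{S+c\sqrt{1+S^2-c^2}}{c^2-S^2}$; applying $\arctan$ with range $[0,\pi)$ gives formula (\ref{uSphereForm}). A direct check at $b=0$, where $S\equiv0$ and $c=\cos(\te)$, gives $\arctan(\tan\te)-\te=0$, so $u_0=0$ and the base sphere is recovered.

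Finally, for the tangent space I would differentiate the identity $\cos(\te+u_b)+\sin(\te+u_b)S=c$ with respect to $b$ at $b=0$. Since $u_0=0$, this implicit differentiation expresses $Du_b|_{b=0}[z]$ as a linear combination of $Y_1^{(n)},\dots,Y_{n+1}^{(n)}$ and the constant function $1$, giving the linearisation recorded above; in particular the image of $Du_b|_{b=0}$ is $\mathrm{span}\{Y_1^{(n)},\dots,Y_{n+1}^{(n)},1\}$. As these $n+2$ functions are linearly independent in $C^0(\SS^n)$, the smooth map $b\mapsto u_b$ is an immersion at $b=0$ with injective differential, and the inverse function theorem (applied to this finite-dimensional family into $C^0(\SS^n)$) then shows that near $b=0$ the set $\mathscr S$ is a differentiable graph over $T_0\mathscr S=\mathrm{span}\{Y_1^{(n)},\dots,Y_{n+1}^{(n)},1\}$. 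The main obstacle I anticipate is the branch selection in the trigonometric equation: one must verify that the chosen sign of the square root and the prescribed range of $\arctan$ produce exactly the pole-separating graph over the whole parameter domain — equivalently that $1+S^2-c^2>0$ and $\te+u_b\in(0,\pi)$ hold there — so that the algebraic domain of $b$ matches the geometric conditions defining $\mathscr S$.
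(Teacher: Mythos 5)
Your proposal is correct and follows essentially the same route as the paper: the paper's ``proof'' is the discussion preceding the lemma in Section \ref{SecSpheres}, which likewise embeds $\SS^{n+1}$ via first-order spherical harmonics, parametrises the non-perpendicular hyperplanes by $b$, reads off the pole-separation constraint, solves the resulting trigonometric equation for $u_b$, and linearises at $b=0$. Your sketch in fact supplies slightly more justification than the paper does (the sign argument for pole separation, the radius computation, and the single-intersection-with-each-meridian claim), and your rationalised form of $\tan(\te+u_b)$ matches (\ref{uSphereForm}) exactly.
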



\section{Proof of Main Theorem}\label{SecProof}
The proof of Theorem \ref{MainTheorem} follows precisely as in \cite{Hartley15} since $DG(0)$ is a positive multiple of the linear operator in that paper, see Lemma 3.1. of \cite{Hartley15} for its definition, and the stationary solutions are again a graph over $Null(DG(0))$. Firstly, since $DG(0)[w]+\frac{\pa F}{\pa \ka_1}(\mathring{\ka})\sin(\theta)^{-2}\fint_{\SS^n}w\dmu{0}$ is the negative of an elliptic operator, it is sectorial as a map from $h^{2,\al}(\SS^n)$ to $h^{0,\al}(\SS^n)$ for any $\al\in(0,1)$. Next, we use that $\frac{\pa F}{\pa \ka_1}(\mathring{\ka})\sin(\theta)^{-2}\fint_{\SS^n}w\dmu{0}$ is a bounded linear map from $h^{2,\al}(\SS^n)$ to $h^{2,\al}(\SS^n)$ to conclude that $DG(0):h^{2,\al}(\SS^n)\rightarrow h^{0,\al}(\SS^n)$ is sectorial for any $\al\in(0,1)$. Now, as being sectorial is a stable condition, this implies that $DG(w):h^{2,\al}(\SS^n)\rightarrow h^{0,\al}(\SS^n)$ is also sectorial for any $w$ in a neighbourhood $0\in O_{\al}\subset h^{2,\al}(\SS^n)$ and $\al\in(0,1)$. Short-time existence for (\ref{TanGenEq}) then follows directly from Theorem 8.4.1 in \cite{Lunardi95}.

\begin{theorem}\label{ShortTime}
For any $\al\in(0,1)$ there are constants $\delta,r>0$ such that if $\left\|u_0\right\|_{h^{2,\alpha}\left(\SS^n\right)}\leq r$ then equation (\ref{TanGenEq}) has a unique maximal solution:
\begin{equation*}
u\in C\left([0,\delta),h^{2,\alpha}\left(\SS^n\right)\right)\cap C^1\left([0,\delta),h^{0,\alpha}\left(\SS^n\right)\right).
\end{equation*}
\end{theorem}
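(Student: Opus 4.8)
The plan is to recognise (\ref{TanGenEq}) as an autonomous fully nonlinear abstract parabolic Cauchy problem and to apply the general local existence theory for such equations, Theorem 8.4.1 of \cite{Lunardi95}. I would set up the Banach couple $(X,D)=(h^{0,\al}(\SS^n),h^{2,\al}(\SS^n))$ and regard $G$ as a nonlinear map $G:O_{\al}\subset D\to X$ on the neighbourhood $0\in O_{\al}$ constructed in the previous paragraph. The problem is genuinely fully nonlinear rather than quasilinear, because $G(u)$ depends on the second derivatives of $u$ through $F$ evaluated at the principal curvatures, i.e. through the eigenvalues of the Weingarten map. The reason the little H\"older spaces are the correct setting is twofold: $D$ is densely and continuously embedded in $X$ (both being completions of the smooth functions), and a second order elliptic operator generates a strongly continuous analytic semigroup on $h^{0,\al}$ with domain $h^{2,\al}$, a property that fails on the non-separable closed H\"older spaces. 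Equivalently, $h^{2,\al}$ and $h^{0,\al}$ sit in the continuous interpolation scale of the elliptic operator, which is precisely the scale in which the fully nonlinear theory produces a solution in the sharp class $C([0,\delta);D)\cap C^1([0,\delta);X)$.

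To invoke the abstract theorem I would verify its hypotheses in turn. First, $G$ is well defined and of class $C^1$ from $O_{\al}$ into $X$: writing $G(u)=L(u)\hat{G}(X_u)$ and using the explicit formulas of Lemma \ref{GraphFormulas} for $T_i(u)$, $g_{ij}(u)$ and $\nu(u)$, one sees that $G$ is assembled from smooth algebraic operations on the metric and its inverse, the scalar nonlocal averaging term, the composition with the smooth symmetric function $F$, and the second order map $u\mapsto h^i_j(u)$; each of these sends $h^{2,\al}$ into $h^{0,\al}$ and depends continuously differentiably, with locally Lipschitz derivative, on $u$. Second, for every $u\in O_{\al}$ the Fr\'echet derivative $DG(u):D\to X$ is sectorial; this is exactly the content of the paragraph preceding the statement, obtained from the sectoriality of $DG(0)$ and the stability of sectoriality under the small perturbation $DG(u)-DG(0)$. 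Third, the domain of $DG(u)$ coincides with $D=h^{2,\al}$ with graph norm equivalent to the $h^{2,\al}$ norm, which is the Schauder estimate for the underlying second order elliptic operator. With these hypotheses in place Theorem 8.4.1 of \cite{Lunardi95} applies directly and yields, for each $u_0\in O_{\al}$, a unique maximal solution in the class $C([0,\delta);h^{2,\al}(\SS^n))\cap C^1([0,\delta);h^{0,\al}(\SS^n))$; it then suffices to choose $r>0$ so small that $\{u_0:\|u_0\|_{h^{2,\al}(\SS^n)}\le r\}\subset O_{\al}$, which gives the stated existence time $\delta=\delta(\al,r)$.

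The abstract machinery being black-boxed in Theorem 8.4.1, the real work lies in the two analytic inputs that feed it. The first, the identification of $h^{2,\al}$ and $h^{0,\al}$ as the domain and base space of an elliptic generator within the continuous interpolation scale, is what forces the use of little rather than closed H\"older spaces and what guarantees the sharp regularity of the solution. The second, and the step I expect to be the most delicate, is the verification that the fully nonlinear map $G$ is genuinely $C^1$ between these spaces: since $G$ encodes the normal geodesic graph geometry of Lemma \ref{GraphFormulas} together with the symmetric function $F$ of the principal curvatures, one must confirm that differentiating through $L(u)$, $g^{ij}(u)$ and the Weingarten map $h^i_j(u)$ introduces no loss of regularity beyond the two derivatives already absorbed in passing from $h^{2,\al}$ to $h^{0,\al}$, and that the resulting derivative is locally Lipschitz in $u$.
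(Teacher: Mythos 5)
Your proposal follows exactly the paper's route: the paper's entire proof is the paragraph preceding the theorem statement, which establishes sectoriality of $DG(0)$ (elliptic part plus bounded averaging perturbation), extends it to $DG(w)$ for $w$ in a neighbourhood $O_{\al}$ by stability of sectoriality, and then invokes Theorem 8.4.1 of \cite{Lunardi95}. Your additional verifications (that $G$ is $C^1$ with locally Lipschitz derivative between the little H\"older spaces, and that the domain of $DG(u)$ is $h^{2,\al}$) are the hypotheses the paper leaves implicit in the phrase ``follows directly,'' so the argument is the same, just spelled out more fully.
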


Next we see the existence of a center manifold which attracts solutions, moreover, locally this is our space of stationary solution $\mathscr{S}$. Let $P$ be the spectral projection from $h^{2,\alpha}\left(\SS^n\right)$ onto $T_0\mathscr{S}$ associated with $DG(0)$, $\la_1$ be the first non-zero eigenvalue of $DG(0)$, and $\psi:U\subset T_0\mathscr{S}\rightarrow (I-P)\left[h^{2,\alpha}\left(\SS^n\right)\right]$ be the local graph function for $\mathscr{S}$.
\begin{lemma}\label{CenterManifold}
The space $\mathscr{S}$ is a local, invariant, exponentially attractive, center manifold for (\ref{TanGenEq}). In particular, there exists $r_1, r_2>0$ such that if $\|u_0\|_{h^{2,\alpha}\left(\SS^n\right)}<r_2$ then there exists $z_0\in T_0\mathscr{S}$ such that
\begin{equation}\label{ExpConv}
\|P[u(t)]-z(t)\|_{h^{0,\alpha}\left(\SS^n\right)}+\|(I-P)[u(t)]-\psi(z(t))\|_{h^{2,\alpha}\left(\SS^n\right)}\leq C\exp(-\om t)\|(I-P)[u_0]-\psi(P[u_0])\|_{h^{2,\alpha}\left(\SS^n\right)},
\end{equation}
for as long as $\|P[u(t)]\|_{h^{0,\alpha}\left(\SS^n\right)}<r_1$, where $\om\in(0,-\la_1)$, $C$ is a constant depending on $\om$, and
\begin{equation}\label{zEq}
z'(t)=P\left[G\left(\eta\left(\frac{z(t)}{r_1}\right)z(t)+\psi(z(t))\right)\right],\ z(0)=z_0,
\end{equation}
where $\eta:T_0\mathscr{S}\rightarrow\RR$ is a smooth cut off function such that $0\leq \eta(x)\leq 1$, $\eta(x)=1$ if $\|x\|_{h^{0,\alpha}\left(\SS^n\right)}\leq 1$ and $\eta(x)=0$ is $\|x\|_{h^{0,\alpha}\left(\SS^n\right)}\geq2$.
\end{lemma}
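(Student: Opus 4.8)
The plan is to obtain the statement from the center manifold theorem for fully nonlinear parabolic equations, exactly as in \cite{Hartley15}, using the two structural facts already in hand: the spectral description of $DG(0)$ from Corollary \ref{LinProp}, and the realisation of $\mathscr{S}$ as a differentiable graph over $T_0\mathscr{S}=Null(DG(0))$ from Lemma \ref{SphereSpace}. First I would assemble the hypotheses of that theorem. The preceding paragraph gives that $DG(w):h^{2,\al}(\SS^n)\to h^{0,\al}(\SS^n)$ is sectorial for all $w$ in a neighbourhood $O_\al$ of $0$, while Corollary \ref{LinProp} supplies the spectral gap: $0$ is an isolated eigenvalue of finite multiplicity $n+2$ and every other point of $\si(DG(0))$ has real part at most $\la_1<0$. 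Writing $G(u)=DG(0)u+R(u)$ I would verify that the remainder $R$ is of class $C^k$ from $h^{2,\al}(\SS^n)$ to $h^{0,\al}(\SS^n)$, for $k$ as large as the theorem requires, with $R(0)=0$ and $DR(0)=0$; this is where the self-interpolation of the little H\"older spaces is used to place the fully nonlinear speed $G$ in the required functional-analytic setting. Multiplying the vector field by the cut-off $\eta(\cdot/r_1)$ then globalises it without altering its germ at $0$, which is precisely what produces the modified reduced equation (\ref{zEq}).

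Applying the theorem yields a locally invariant manifold, the graph of a map $\psi:U\subset T_0\mathscr{S}\to(I-P)[h^{2,\al}(\SS^n)]$, tangent to the center subspace $Null(DG(0))$ at $0$, on which the dynamics is governed by (\ref{zEq}), and which is exponentially attractive with the estimate (\ref{ExpConv}) for any rate $\om\in(0,-\la_1)$; all of these are direct outputs of the theorem once the hypotheses above are checked.

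It then remains to identify this center manifold with $\mathscr{S}$ near $0$. Each $u_b\in\mathscr{S}$ defines a geodesic sphere, on which $\ka$ and hence $F(\ka(X_{u_b}))$ is constant, so the weighted average in (\ref{GhatForm}) cancels $F(\ka)$ and $\hat{G}(X_{u_b})=0$; thus $G(u_b)=L(u_b)\hat{G}(X_{u_b})=0$ and $\mathscr{S}$ is a manifold of equilibria. Since a local center manifold contains every trajectory remaining near $0$ for all time, it contains every nearby equilibrium, so $\mathscr{S}\subset\text{graph}(\psi)$ locally; as $\dim\mathscr{S}=n+2=\dim Null(DG(0))$ and both sets are graphs over $T_0\mathscr{S}$, they coincide in a neighbourhood of $0$, identifying $\psi$ with the graph function of $\mathscr{S}$.

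I expect the genuine difficulty to lie in the functional-analytic bookkeeping rather than the geometry: confirming the domain and $C^k$ regularity of the nonlinear map $G$ between the little H\"older spaces so that the abstract center manifold theorem applies to this fully nonlinear flow. The identification step is comparatively soft, and it has the pleasant side effect of removing the usual non-uniqueness of center manifolds, since a full-dimensional manifold of equilibria tangent to $Null(DG(0))$ forces every local center manifold to agree with $\mathscr{S}$.
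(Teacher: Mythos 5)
Your proposal is correct and follows essentially the same route as the paper: existence of the local center manifold as a graph over $Null(DG(0))$ via Lunardi's Theorem 9.2.2 (with the exponential attractivity estimate from Proposition 9.2.4), followed by the identification $\mathcal{M}^c=\mathscr{S}$ using the fact that a local center manifold contains all nearby equilibria (the paper cites Vanderbauwhede--Iooss, Theorem 2.3, for exactly the property you invoke) together with the observation that $\mathscr{S}$ consists of stationary solutions and is a graph over the same space. Your dimension-count closing step and your explicit remark on the $C^k$ bookkeeping for $G$ between little H\"older spaces only make explicit what the paper leaves implicit.
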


\begin{proof}
The existence of a local center manifold, $\mathcal{M}^c$, follows from Theorem 9.2.2 in \cite{Lunardi95}, where it is also shown to be a local graph over the nullspace of $DG(0)$, i.e. $T_0\mathscr{S}$. Theorem 2.3 in \cite{Iooss92} states that $\mathcal{M}^c$ contains all local stationary solutions, i.e. $\mathscr{S}\subset\mathcal{M}^c$, so combining these two facts we see that $\mathcal{M}^c=\mathscr{S}$. The exponential attractivity comes from Proposition 9.2.4 of \cite{Lunardi95}.
\end{proof}

Using (\ref{ExpConv}) evaluated at $t=0$ we obtain
\begin{align*}
\|z_0\|_{h^{0,\alpha}\left(\SS^n\right)}\leq& \|P[u_0]\|_{h^{0,\alpha}\left(\SS^n\right)} +\|P[u_0]-z_0\|_{h^{0,\alpha}\left(\SS^n\right)}\\
\leq &\|P[u_0]\|_{h^{0,\alpha}\left(\SS^n\right)} +C\|(I-P)[u_0]-\psi(P[u_0])\|_{h^{2,\alpha}\left(\SS^n\right)},
\end{align*}
so since $\psi$ is Lipschitz and $P$ is bounded, this leads to a bound of the form $\|z_0\|_{h^{0,\alpha}\left(\SS^n\right)}\leq C\|u_0\|_{h^{2,\alpha}\left(\SS^n\right)}$. Therefore we can ensure that $\|z_0\|_{h^{0,\alpha}\left(\SS^n\right)}<r_1$ by taking $\|u_0\|_{h^{2,\alpha}\left(\SS^n\right)}$ small enough, and since $z_0+\psi(z_0)$ defines a sphere, we see $G\left(\eta\left(\frac{z_0}{r_1}\right)z_0+\psi(z_0)\right)=G\left(z_0+\psi(z_0)\right)=0$. Hence $z(t)=z_0$ is the solution to (\ref{zEq}) and we can restate (\ref{ExpConv}) as
\begin{equation}\label{ConvEq}
\|P[u(t)]-z_0\|_{h^{0,\alpha}\left(\SS^n\right)}+\|(I-P)[u(t)]-\psi(z_0)\|_{h^{2,\alpha}\left(\SS^n\right)}\leq C\exp(-\om t)\|(I-P)[u_0]-\psi(P[u_0])\|_{h^{2,\alpha}\left(\SS^n\right)},
\end{equation}
for as long as $P[u(t)]\in B_{r_1}(0)$. However using this bound, and our bound for $z_0$, it follows that $\|P[u(t)]\|_{h^{0,\alpha}\left(\SS^n\right)}<C\|u_0\|_{h^{2,\alpha}\left(\SS^n\right)}$ as long as $\|P[u(t)]\|_{h^{0,\alpha}\left(\SS^n\right)}<r_1$. By choosing $\|u_0\|_{h^{2,\alpha}\left(\SS^n\right)}$ small enough we can therefore ensure $\|P[u(t)]\|_{h^{0,\alpha}\left(\SS^n\right)}<\frac{r_1}{2}$ for all $t\geq0$. Thus (\ref{ConvEq}) is true for all $t\geq0$ and this proves that $u(t)$ converges to $z_0+\psi(z_0)$ as $t\rightarrow\infty$. This completes the proof of Theorem \ref{MainTheorem} since $z_0+\psi(z_0)$ is the graph function of a sphere.

We also have the following corollary that follows by a simple continuity argument as in \cite[Corollary 3.8]{Guenther02} or \cite[Corollary 4.4]{Hartley15}.
\begin{corollary}
Let $\Om_0$ be a graph over a sphere with height function $u_0$ such that the solution, $u(t)$, to the flow (\ref{TanGenEq}) with initial condition $u_0$ exists for all time and converges to zero. Suppose further that $\left.\frac{\partial F}{\partial \kappa_i}\right|_{\kappa\left(X_{u(t)}\right)}>0$ for all $t\in[0,\infty)$ and $i=1,\ldots,n$. Then there exists a neighbourhood, $O$, of $u_0$ in $h^{2,\alpha}\left(\SS^n\right)$, $0<\alpha<1$, such that for every $w_0\in O$ the solution to (\ref{TanGenEq}) with initial condition $w_0$ exists for all time and converges to a function near zero whose graph is a sphere.
\end{corollary}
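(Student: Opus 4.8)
The plan is to combine the all-time convergence supplied by Theorem \ref{MainTheorem} with continuous dependence of the flow (\ref{TanGenEq}) on its initial datum over a finite time interval. By Theorem \ref{MainTheorem} there is a radius $\rho>0$ such that every $v_0\in h^{2,\al}(\SS^n)$ with $\|v_0\|_{h^{2,\al}(\SS^n)}<\rho$ launches a solution of (\ref{TanGenEq}) that exists for all time and converges in $h^{2,\al}$ to the graph function of a geodesic sphere near $\Scal$. Since by hypothesis the solution $u(t)$ starting from $u_0$ converges to $0$, there is a time $T>0$ with $\|u(T)\|_{h^{2,\al}(\SS^n)}<\rho/2$; I would fix such a $T$ once and for all.

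First I would record that the trajectory $\{u(t):t\in[0,T]\}$ is a compact subset of $h^{2,\al}(\SS^n)$, being the continuous image of $[0,T]$ under $u$. Along this trajectory the hypothesis $\frac{\pa F}{\pa\ka_i}(\ka(X_{u(t)}))>0$ guarantees that the principal symbol of $DG(u(t))$ is that of the negative of an elliptic operator, exactly as in the computation of $DG(0)$ in Section \ref{SecLin}; hence $DG(u(t)):h^{2,\al}(\SS^n)\to h^{0,\al}(\SS^n)$ is sectorial for every $t\in[0,T]$, and by compactness of the trajectory the sectoriality constants can be taken uniform. This is the step in which the positivity assumption on $\frac{\pa F}{\pa\ka_i}$ is genuinely used, and it is what makes the continuity argument run.

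Next I would invoke continuous dependence on initial data for the quasilinear problem (\ref{TanGenEq}). Using the uniform sectoriality just obtained, one may cover $[0,T]$ by finitely many short-time existence intervals as in Theorem \ref{ShortTime}, on each of which the solution map $w_0\mapsto w(\cdot)$ is continuous into $C([0,\delta),h^{2,\al}(\SS^n))$; chaining these finitely many estimates produces a neighbourhood $O$ of $u_0$ in $h^{2,\al}(\SS^n)$ such that for every $w_0\in O$ the corresponding solution $w(t)$ exists on $[0,T]$ and satisfies $\|w(T)-u(T)\|_{h^{2,\al}(\SS^n)}<\rho/2$. In particular $\|w(T)\|_{h^{2,\al}(\SS^n)}<\rho$, so $w(T)$ lies in the basin of attraction supplied by Theorem \ref{MainTheorem}. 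Since (\ref{TanGenEq}) is autonomous, applying that theorem to the initial datum $w(T)$ shows that $w(t)$ extends to all $t\ge T$ and converges in $h^{2,\al}$ to a function near $0$ whose graph is a geodesic sphere, which is the assertion of the corollary.

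The hard part will be propagating continuous dependence from the short-time scale to the full interval $[0,T]$: a single application of Theorem \ref{ShortTime} controls the solution only on an interval whose length depends on the sectoriality constants of $DG$, so one must verify that these constants do not degenerate along $\{u(t):t\in[0,T]\}$. Compactness of the trajectory together with the uniform positivity of $\frac{\pa F}{\pa\ka_i}$ along it supplies precisely this uniformity, after which the finite chaining of continuous-dependence estimates is routine. This is why the statement assumes $\frac{\pa F}{\pa\ka_i}(\ka(X_{u(t)}))>0$ for all $t\in[0,\infty)$ rather than merely at $t=0$.
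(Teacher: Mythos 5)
Your proposal is correct and is essentially the paper's own argument: the paper proves this corollary by citing "a simple continuity argument as in \cite[Corollary 3.8]{Guenther02} or \cite[Corollary 4.4]{Hartley15}", and what you have written out --- choosing $T$ so that $u(T)$ lands in the basin of attraction given by Theorem \ref{MainTheorem}, using the positivity of $\frac{\pa F}{\pa\ka_i}$ along the compact trajectory to keep $DG(u(t))$ uniformly sectorial, chaining finitely many continuous-dependence estimates over $[0,T]$, and then invoking the main theorem from time $T$ onward --- is exactly that continuity argument made explicit.
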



\appendix

\section{Form of the Weight Function}\label{SecWeight}
In this appendix we determine the form the weight function must take in order to preserve the quantity $\hat{V}$ in (\ref{VhatForm}). We start by considering the linearisation of the mixed volumes. We will abuse notation and set $\hat{V}(X)=\hat{V}(X(M^n))$ and $\hat{V}_a(X)=\hat{V}_a(X(M^n))$.
\begin{lemma}\label{VhatLin}
The mixed volumes have the linearisation
\[
D\hat{V}_a(X)[Y]= \int_{M^n} \hat{\Xi}_a(X)\gb\left(Y,\nu\right)\dmu{},
\]
for $a=0,\ldots,n+1$, with $\hat{\Xi}_a$ as defined in (\ref{XiaFormEq}).
\end{lemma}

\begin{proof}
We first note that the formula for $D\hat{V}_{n+1}(X)[Y]$ is standard.

Now we consider the linearisation of the mixed volumes with $0\leq a\leq n$, but before starting the calculation we state some useful relations for the elementary symmetric functions:
\begin{equation}\label{ELowHigh}
\frac{\pa E_a}{\pa h^i_j}=g_{ik}g^{jl}\frac{\pa E_a}{\pa h^l_k},
\end{equation}
\begin{equation}\label{ELowDegree}
\frac{\pa E_{a+1}}{\pa h^i_j}=E_a\de_i^j-h^j_k\frac{\pa E_a}{\pa h^i_k},
\end{equation}
and
\begin{equation}\label{ESwitch}
h_{ij}\frac{\pa E_a}{\pa h^k_j}=h_{kj}\frac{\pa E_a}{\pa h^i_j},
\end{equation}
which are all easily obtained from (\ref{EDeriv}).

We can now calculate the linearisation for $a=0,\ldots,n$ using Lemma \ref{LemStandLin}
\begin{align*}
(n+1)\binom{n}{a}D\hat{V}_{n-a}(X)[Y]=&\int_{M^n}\frac{\pa E_a}{\pa h_j^i}D h^i_j(X)[Y] +\frac{E_{a}}{\mu}D\mu(X)[Y]\dmu{}\\
=&\int_{M^n}-g^{ik}\frac{\pa E_{a}}{\pa h^i_j}\left(\gb\left(\nablab_{T_k}\nablab_{T_j}Y,\nu\right) +\gb\left(\bar{R}\left(Y,T_k\right)\nu,T_j\right)\right)\\
&\hspace{0.5cm} -g^{ik}\frac{\pa E_{a}}{\pa h^i_j}h^l_j\left(\gb\left(\nablab_{T_k}Y,T_l\right)+\gb\left(\nablab_{T_l}Y,T_k\right)\right) +E_{a}g^{ik}\gb\left(\nablab_{T_i}Y,T_k\right)\dmu{}\\
=&\int_{M^n} -g^{ik}\left(h_j^l\frac{\pa E_a}{\pa h^i_j}\gb\left(\nablab_{T_k}Y,T_l\right) +h_j^l\frac{\pa E_a}{\pa h^i_j}\gb\left(\nablab_{T_l}Y,T_k\right) -\nabla_k\left(\frac{\pa E_a}{\pa h^i_j}\right)\gb\left(\nablab_{T_j}Y,\nu\right)\right.\\
&\left.\hspace{1.4cm} -\frac{\pa E_a}{\pa h^i_j}\gb\left(\nablab_{T_j}Y,\nablab_{T_k}\nu\right) +\frac{\pa E_a}{\pa h^i_j}\gb\left(\bar{R}(Y,T_k)\nu,T_j\right) -E_a\gb\left(\nablab_{T_i}Y,T_k\right)\right)\dmu{}\\
=&\int_{M^n} \left(h^{iq}\frac{\pa E_a}{\pa h^i_p} -g^{ip}h_j^q\frac{\pa E_a}{\pa h^i_j} -g^{iq}h_j^p\frac{\pa E_a}{\pa h^i_j} +g^{pq}E_a\right)\gb\left(\nablab_{T_p}Y,T_q\right)  -g^{ik}\frac{\pa E_a}{\pa h^i_j}\gb\left(\bar{R}(Y,T_k)\nu,T_j\right)\\
&\hspace{0.5cm} -g^{ik}\nabla_j\nabla_k\left(\frac{\pa E_a}{\pa h^i_j}\right)\gb(Y,\nu) -g^{ik}\nabla_k\left(\frac{\pa E_a}{\pa h^i_j}\right)\gb\left(Y,\nablab_{T_j}\nu\right)\dmu{}.
\end{align*}
We now use Equation (\ref{ELowHigh}) to cancel the first two terms in the $\gb\left(\nablab_{T_p}Y,T_q\right)$ factor, and Equation (\ref{ESwitch}) to alter the third term of the factor:
\begin{align*}
(n+1)\binom{n}{a}D\hat{V}_{n-a}(X)[Y]=&\int_{M^n}\left(E_ag^{pq} -g^{iq}g^{pl}h_{ij}\frac{\pa E_{a}}{\pa h^l_j}\right)\gb\left(\nablab_{T_p}Y,T_q\right)  -g^{ik}\frac{\pa E_a}{\pa h^i_j}\gb\left(\bar{R}(Y,T_k)\nu,T_j\right)\\
&\hspace{0.5cm} -g^{ik}\nabla_j\nabla_k\left(\frac{\pa E_a}{\pa h^i_j}\right)\gb(Y,\nu) -g^{ik}h_j^l\nabla_k\left(\frac{\pa E_a}{\pa h^i_j}\right)\gb(Y,T_l)\dmu{}\\
=&\int_{M^n} -g^{ip}\left(\frac{\pa E_a}{\pa h^k_l}\nabla_p h^k_l\de_i^q -\nabla_p\left(\frac{\pa E_a}{\pa h^i_j}\right) h_j^q -\frac{\pa E_{a}}{\pa h^i_j}\nabla_p h_j^q\right)\gb(Y,T_q)\\
&\hspace{0.5cm}  -g^{ip}\left(E_a\de_i^q -h_j^q\frac{\pa E_{a}}{\pa h^i_j}\right)\gb\left(Y,\nablab_{T_p}T_q\right) -g^{ik}\nabla_j\nabla_k\left(\frac{\pa E_a}{\pa h^i_j}\right)\gb(Y,\nu)\\
&\hspace{0.5cm} -g^{ik}h_j^l\nabla_k\left(\frac{\pa E_a}{\pa h^i_j}\right)\gb(Y,T_l)   -g^{ik}\frac{\pa E_a}{\pa h^i_j}\gb\left(\bar{R}(Y,T_k)\nu,T_j\right)\dmu{}\\
=&\int_M -\left(g^{pq}g^{ik}\frac{\pa E_a}{\pa h^k_l}\nabla_p h_{il} -g^{ip}g^{ql}\frac{\pa E_{a}}{\pa h^i_j}\nabla_p h_{jl}\right)\gb(Y,T_q) +g^{ip}h_{pq}\frac{\pa E_{a+1}}{\pa h^i_q}\gb(Y,\nu)\\
&\hspace{0.5cm} -g^{ik}\nabla_j\nabla_k\left(\frac{\pa E_a}{\pa h^i_j}\right)\gb(Y,\nu) -g^{ik}\frac{\pa E_a}{\pa h^i_j}\gb\left(\bar{R}(Y,T_k)\nu,T_j\right)\dmu{}\\
=&\int_{M^n} -g^{ip}g^{ql}\frac{\pa E_{a}}{\pa h^i_j}\left(\nabla_l h_{pj} -\nabla_p h_{jl}\right)\gb(Y,T_q) +h^i_{q}\frac{\pa E_{a+1}}{\pa h^i_q}\gb(Y,\nu)\\
&\hspace{0.5cm} -g^{ik}\nabla_j\nabla_k\left(\frac{\pa E_a}{\pa h^i_j}\right)\gb(Y,\nu) -g^{ik}\frac{\pa E_a}{\pa h^i_j}\gb\left(\bar{R}(Y,T_k)\nu,T_j\right)\dmu{}.
\end{align*}
Now we use the homogeneity $E_a$ and the Gauss-Codazzi equation:
\begin{align*}
(n+1)\binom{n}{a}D\hat{V}_{n-a}(X)[Y]=&\int_{M^n} -g^{ip}g^{ql}\frac{\pa E_{a}}{\pa h^i_j}\gb\left(\bar{R}(T_l,T_p)T_j,\nu\right)\gb(Y,T_q) +(a+1)E_{a+1}\gb(Y,\nu)\\
&\hspace{0.5cm} -g^{ik}\nabla_j\nabla_k\left(\frac{\pa E_a}{\pa h^i_j}\right)\gb(Y,\nu) -g^{ik}\frac{\pa E_a}{\pa h^i_j}\gb\left(\bar{R}(Y,T_k)\nu,T_j\right)\dmu{}\\
=&\int_{M^n} -g^{ik}\frac{\pa E_{a}}{\pa h^i_j}\gb\left(\bar{R}\left(Y-g^{ql}\gb(Y,T_q)T_l,T_k\right)\nu,T_j\right) +(a+1)E_{a+1}\gb(Y,\nu)\\
&\hspace{0.5cm} -g^{ik}\nabla_j\nabla_k\left(\frac{\pa E_a}{\pa h^i_j}\right)\gb(Y,\nu)\dmu{}\\
=&\int_{M^n} -g^{ik}\frac{\pa E_{a}}{\pa h^i_j}\gb\left(\bar{R}\left(\gb(Y,\nu)\nu,T_k\right)\nu,T_j\right) +(a+1)E_{a+1}\gb(Y,\nu) -g^{ik}\nabla_j\nabla_k\left(\frac{\pa E_a}{\pa h^i_j}\right)\gb(Y,\nu)\dmu{}.
\end{align*}
\end{proof}

\begin{corollary}\label{XiForm}
If
\[
\hat{\Xi}(X)=\sum_{a=0}^{n+1}c_a\hat{\Xi}_a(X)
\]
for some constants $c_a\in\RR$, $a=0,\ldots,n+1$, where $\hat{\Xi}_a(X)$ are defined in (\ref{XiaFormEq}), then $\hat{V}(X)$ is preserved by the flow (\ref{GeneralEq}).
\end{corollary}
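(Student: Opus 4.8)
The plan is to differentiate $\hat{V}$ in time along a solution of (\ref{GeneralEq}) and to show the result vanishes, using only Lemma \ref{VhatLin} together with the averaging structure already built into the speed (\ref{GhatForm}). First I would apply the chain rule to the functional $\hat{V}$ evaluated on the moving surface, and substitute the flow equation $\pat{X}=\hat{G}(X)\nu$, to get
\[
\frac{d}{dt}\hat{V}(X)=D\hat{V}(X)\left[\pat{X}\right]=D\hat{V}(X)[\hat{G}(X)\nu].
\]
Because the differential is linear in the variation field and $\hat{V}=\sum_{a=0}^{n+1}c_a\hat{V}_a$ by (\ref{VhatForm}), this immediately splits as $\sum_{a=0}^{n+1}c_a\,D\hat{V}_a(X)[\hat{G}(X)\nu]$, reducing everything to the linearisations of the individual mixed volumes.

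Next I would invoke Lemma \ref{VhatLin}, which states $D\hat{V}_a(X)[Y]=\int_{M^n}\hat{\Xi}_a(X)\gb(Y,\nu)\dmu{}$. Choosing $Y=\hat{G}(X)\nu$ and using $\gb(\nu,\nu)=1$, each term collapses to $\int_{M^n}\hat{\Xi}_a(X)\hat{G}(X)\dmu{}$. Summing against the constants $c_a$ and recalling the hypothesis $\hat{\Xi}=\sum_{a=0}^{n+1}c_a\hat{\Xi}_a$ from (\ref{XiFormEq}), I obtain
\[
\frac{d}{dt}\hat{V}(X)=\int_{M^n}\hat{\Xi}(X)\hat{G}(X)\dmu{}.
\]
The final step is to substitute the explicit form (\ref{GhatForm}) of $\hat{G}$ and to observe the intended cancellation: since the ratio $\left(\int_{M^n}F(\ka)\hat{\Xi}\dmu{}\right)/\left(\int_{M^n}\hat{\Xi}\dmu{}\right)$ is a constant that may be pulled out of the integral, one finds
\[
\int_{M^n}\hat{\Xi}\hat{G}\dmu{}=\frac{\int_{M^n}F(\ka)\hat{\Xi}\dmu{}}{\int_{M^n}\hat{\Xi}\dmu{}}\int_{M^n}\hat{\Xi}\dmu{}-\int_{M^n}F(\ka)\hat{\Xi}\dmu{}=0,
\]
so $\hat{V}$ is constant along the flow.

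Since all the geometric labour has already been discharged in Lemma \ref{VhatLin}, I do not anticipate a genuine obstacle here; the statement is essentially a bookkeeping consequence of that lemma. The only point warranting a remark is that $D\hat{V}_a$ depends on the variation field $Y$ solely through its normal component $\gb(Y,\nu)$, which is precisely what makes the tangential-reparametrisation freedom in (\ref{GeneralEq}) irrelevant to $\hat{V}$ and what allows the mean-zero form of $\hat{G}$ to produce the exact cancellation above.
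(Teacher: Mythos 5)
Your argument is correct and follows exactly the paper's own proof: apply the chain rule, use linearity together with Lemma \ref{VhatLin} to get $\frac{d}{dt}\hat{V}=\int_{M^n}\hat{\Xi}(X)\hat{G}(X)\dmu{}$, and observe that the averaging structure of (\ref{GhatForm}) makes this integral vanish. The only difference is that you spell out the final cancellation explicitly, which the paper leaves as an immediate consequence of the form of $\hat{G}$.
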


\begin{proof}
By Lemma \ref{VhatLin} and linearity we have
\begin{equation}\label{EqVhatLin}
	D\hat{V}(X)[Y]=\int_{M^n}\hat{\Xi}(X)\gb(Y,\nu(X))\dmu{}.
\end{equation}
It then follows from the form of $\hat{G}(X)$ in (\ref{GhatForm}) that under (\ref{GeneralEq})
\begin{align*}
\frac{\pa\hat{V}}{\pa t}=&D\hat{V}(X)\left[\frac{\pa X}{\pa t}\right]\\
=&D\hat{V}(X)\left[\hat{G}(X)\nu(X)\right]\\
=&\int_{M^n}\hat{\Xi}(X)\hat{G}(X)\dmu{}\\
=&0,
\end{align*}
thus under this weight function we have $\hat{V}(\Om_t)=\hat{V}(\Om_0)$ as long as the flow exists.
\end{proof}

\bibliographystyle{plain}

\begin{thebibliography}{10}

\bibitem{Alikakos03}
N.~D. Alikakos and A.~Freire.
\newblock The normalized mean curvature flow for a small bubble in a
  {R}iemannian manifold.
\newblock {\em J. Differential Geom.}, 64(2):247--303, 2003.

\bibitem{Cabezas07}
E.~Cabezas-Rivas and V.~Miquel.
\newblock Volume preserving mean curvature flow in the hyperbolic space.
\newblock {\em Indiana Univ. Math. J.}, 56(5):2061--2086, 2007.

\bibitem{Escher98A}
J.~Escher and G.~Simonett.
\newblock The volume preserving mean curvature flow near spheres.
\newblock {\em Proc. Amer. Math. Soc.}, 126(9):2789--2796, 1998.

\bibitem{Gao03}
F.~Gao, D.~Hug, and R.~Schneider.
\newblock Intrinsic volumes and polar sets in spherical space.
\newblock {\em Math. Notae}, 41:159--176 (2003), 2001/02.
\newblock Homage to Luis Santal{\'o}. Vol. 1 (Spanish).

\bibitem{Guenther02}
C.~Guenther, J.~Isenberg, and D.~Knopf.
\newblock Stability of the {R}icci flow at {R}icci-flat metrics.
\newblock {\em Comm. Anal. Geom.}, 10(4):741--777, 2002.

\bibitem{HartleyPhD}
D.~Hartley.
\newblock {\em Stability of stationary solutions to curvature flows}.
\newblock PhD Thesis.

\bibitem{Hartley15}
D.~Hartley.
\newblock Motion by mixed volume preserving curvature functions near spheres.
\newblock {\em Pacific J. Math.}, 274(2):437--450, 2015.

\bibitem{Hartley15B}
D.~Hartley.
\newblock Stability of near cylindrical stationary solutions to weighted-volume
  preserving curvature flows.
\newblock {\em J. Geom. Anal.}, 2015.

\bibitem{Huisken87}
G.~Huisken.
\newblock The volume preserving mean curvature flow.
\newblock {\em J. Reine Angew. Math.}, 382:35--48, 1987.

\bibitem{Lunardi95}
A.~Lunardi.
\newblock {\em Analytic semigroups and optimal regularity in parabolic
  problems}.
\newblock Progress in Nonlinear Differential Equations and their Applications,
  16. Birkh\"auser Verlag, Basel, 1995.

\bibitem{McCoy05}
J.A. McCoy.
\newblock Mixed volume preserving curvature flows.
\newblock {\em Calc. Var. Partial Differential Equations}, 24(2):131--154,
  2005.

\bibitem{Iooss92}
A.~Vanderbauwhede and G.~Iooss.
\newblock Center manifold theory in infinite dimensions.
\newblock In {\em Dynamics reported: expositions in dynamical systems},
  volume~1 of {\em Dynam. Report. Expositions Dynam. Systems (N.S.)}, pages
  125--163. Springer, Berlin, 1992.

\end{thebibliography}

\end{document}